\documentclass{amsart}
\usepackage {amssymb}
\usepackage {amsmath}
\usepackage{amsthm}
\usepackage{graphicx}
\usepackage {amscd}
\usepackage {epic}
\usepackage {color}
\usepackage[alphabetic]{amsrefs}
\usepackage[colorlinks, citecolor=blue]{hyperref}
\usepackage[all]{xy}






\newcommand{\E}{{\mathbb{E}}}



\newtheorem{thm}{Theorem}[section]

\newtheorem{lem}[thm]{Lemma}
\newtheorem{cor}[thm]{Corollary}

\newtheorem{prop}[thm]{Proposition}

   
\theoremstyle{definition}
\newtheorem{defn}[thm]{Definition}


\newtheorem{rem}[thm]{Remark}

\newtheorem{defn-thm}[thm]{Definition--Theorem}  
\newtheorem{defn-lem}[thm]{Definition--Lemma}  

\theoremstyle{remark}

\newcommand{\QQ}{\mathbb{Q}}

\input{diagrams.sty}

\begin{document}
\title{On base point free theorem of threefolds in positive characteristic}
\begin{abstract}{
Let  $(X,\Delta)$ be a projective klt three dimensional pair defined over  an algebraically closed field $k$ with ${\rm char} (k)>5$.  Let $L$ be a nef and big line bundle on $X$ such that $L-K_X-\Delta$ is big and nef. We show that $L$ is indeed semi-ample.}
\end{abstract}

%
\author    {Chenyang Xu}
\address   {Beijing International Center of Mathematical Research,
          5  Yiheyuan Road, Beijing, 100871, China} 
\email     {cyxu@math.pku.edu.cn}

\date{\today}

\maketitle{}

\tableofcontents
\section{Introduction}Throughout the paper, the ground field will be an algebraically closed field $k$ of characteristic $p>0$.
The main purpose of this paper is to prove the following theorem.
\begin{thm}\label{t-main1}
Assume $k$ to be an algebraically closed field of characteristic $p>5$.  Let $(X,\Delta)$ be a 3-dimensional  klt pair which is projective over a quasi-projective variety $U$.
Assume $L$ is a relatively big and nef $\mathbb{Q}$-divisor such that $K_X+\Delta+L$  is big and nef over $U$. Then $K_X+\Delta+L$ is semi-ample over $U$. 
\end{thm}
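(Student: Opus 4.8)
The plan is to reduce semi-ampleness to a two-dimensional problem via Keel's theorem and then to exploit the bigness of $L$ to control the exceptional locus. Set $N=K_X+\Delta+L$; by hypothesis $N$ is nef and big over $U$. After replacing $(X,\Delta)$ by a $\mathbb{Q}$-factorial dlt modification (here $p>5$ guarantees its existence, together with the three-dimensional cone and contraction theorems) and pulling back $N$, we may assume $(X,\Delta)$ is $\mathbb{Q}$-factorial dlt, since semi-ampleness of the pullback descends. Let $V=\mathbb{E}(N)$ be the exceptional locus of $N$, the union of all positive-dimensional subvarieties $Z\subseteq X$ on which $N$ is not big over $U$. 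Because $N$ is big, $V$ is a proper closed subset and $\dim V\le 2$, and Keel's theorem asserts that $N$ is semi-ample over $U$ if and only if $N|_V$ is semi-ample over $U$.

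I would then argue by descending induction on the dimension of the components of $V$. Components of dimension $0$ are trivial, and on a component of dimension $1$ the restriction of $N$ is a nonnegative nef $\mathbb{Q}$-divisor on a proper curve, hence semi-ample by the elementary part of Keel's analysis. The real content is a two-dimensional component $S\subseteq V$, on which $N|_S$ is nef but, by the definition of $V$, not big over its image; thus $\nu(N|_S)\in\{0,1\}$, and Keel's recursion stalls precisely because $N|_S$ is not big. To get around this I propose to show directly that every such $S$ is, after normalization and adjunction, a surface of Fano type over the base, for then every nef $\mathbb{Q}$-divisor on it---in particular $N|_S$---is semi-ample, since surfaces of Fano type are Mori dream spaces by minimal model theory for surfaces in positive characteristic.

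The key point forcing $S$ to be of Fano type is the hypothesis that $L$ is big. Writing $N|_{\bar S}=(K_X+\Delta)|_{\bar S}+L|_{\bar S}$ on the normalization $\bar S$ and using adjunction to produce a klt pair $(\bar S,\Delta_{\bar S})$ with $(K_X+\Delta)|_{\bar S}=K_{\bar S}+\Delta_{\bar S}$, the relation $-(K_{\bar S}+\Delta_{\bar S})=L|_{\bar S}-N|_{\bar S}$ expresses the anti-log-canonical class as the difference of the big class $L|_{\bar S}$ and the nef class $N|_{\bar S}$. When $L|_{\bar S}$ is big this difference is big, and running the surface MMP upgrades it to a genuine Fano type structure, putting us in the situation of the previous paragraph.

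I expect the main obstacle to be exactly the verification of this adjunction-and-Fano-type step, together with the residual case where $L|_S$ is itself \emph{not} big. In that residual case $S$ lies in $\mathbb{E}(L)$ as well, $L|_S$ has numerical dimension at most one, and one must instead analyze the common fibration structure of $L$ and $N$ on $S$ and prove the semi-ampleness of numerically trivial nef classes---i.e.\ their being torsion---on the fibers. Guaranteeing this torsionness without Kawamata--Viehweg vanishing is the crux, and I would resolve it using the theory of $F$-singularities (strong $F$-regularity of klt threefold singularities for $p>5$) to force the relevant centers to be of Fano type and hence to carry only torsion numerically trivial classes. Feeding the resulting semi-ampleness of $N|_V$ back through Keel's theorem then yields the semi-ampleness of $N=K_X+\Delta+L$ over $U$.
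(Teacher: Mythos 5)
Your reduction to $\mathbb{E}(N)$ via Keel is the right opening move, but the way you handle the two-dimensional components of $\mathbb{E}(N)$ contains a genuine error, and the one-dimensional case---which is the actual crux---is left unresolved. The error: from $-(K_{\bar S}+\Delta_{\bar S})=L|_{\bar S}-N|_{\bar S}$ you conclude that bigness of $L|_{\bar S}$ makes the difference big. Subtracting a nef class from a big class does not preserve bigness: on $\mathbb{P}^1\times\mathbb{P}^1$ take $L=\mathcal{O}(1,1)$ and $N=\mathcal{O}(0,5)$; then $N$ is nef of numerical dimension one, $L$ is ample, and $L-N=\mathcal{O}(1,-4)$ is not even pseudo-effective. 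So there is no reason for $S$ to be of Fano type, and the Mori-dream-space route collapses. The adjunction you invoke is also off: unless $S$ is a component of $\lfloor\Delta\rfloor$, the correct statement is $(K_X+\Delta+S)|_{\bar S}=K_{\bar S}+\mathrm{Diff}_{\bar S}(\Delta)$, so $(K_X+\Delta)|_{\bar S}$ differs from a log canonical class by $-S|_{\bar S}$, which is not effective; you do not get a klt pair $(\bar S,\Delta_{\bar S})$ with $(K_X+\Delta)|_{\bar S}=K_{\bar S}+\Delta_{\bar S}$.

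The paper avoids all of this by two moves you are missing. First, since $L$ is big and nef, the Bertini-type statement (Proposition \ref{p-bertini}) produces $\Delta'\sim_{\mathbb{Q}}\Delta+L$ with $(X,\Delta')$ klt, so one may assume from the start that $N=K_X+\Delta$ for a klt pair; this is what makes MMP techniques applicable to $N$ itself. Second, rather than analyzing the surfaces in $\mathbb{E}(N)$ in situ, one runs a generalized MMP for $(X,\Delta+\epsilon S)$ over the endowed algebraic space $Z$, where $S$ is the sum of the divisorial components of $\mathbb{E}(N)$; the negativity lemma shows exactly these divisors are contracted, reducing to $\dim\mathbb{E}(N)=1$. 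What remains is precisely the problem you flag as the crux---showing that the numerically trivial nef class $N|_C$ on a contracted curve $C$ is torsion---but your proposed resolution via strong $F$-regularity is a gesture, not an argument. The paper's mechanism is: tie-break with the big divisor to get $H\sim_{\mathbb{Q}}N$ and $c=\mathrm{lct}(X,\Delta;H)$ with a unique lc place over $C$; take a $\mathbb{Q}$-factorial dlt modification and run a generalized MMP for $K_V+\Delta_V+cH_V\equiv_Z-E$ to reach a model $W$ with $-E_W$ nef over $Z$, so that $E_W\supset\mathrm{Ex}(W/Z)$; restrict to the normal plt divisor $E_W$ and apply abundance for klt surfaces to get $\mathbb{Q}$-triviality, then feed this back through the relative Keel theorem. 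Finally, because $Z$ is only an algebraic space, a separate \'etale-local descent using the norm map (Proposition \ref{p-curve}) is needed to conclude torsionness globally. None of these steps appears in your outline, so the proposal as written does not constitute a proof.
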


When $k= \overline{\mathbb{F}}_p$, this is proved by Keel in \cite{Keel99}. In fact, Keel proved in general $K_X+\Delta+L$ is {\it endowed with a map} (EWM), i.e., there exists a morphism $f:X\to Z$ to an algebraic space $Z$, such that a curve vertical over $U$ is contracted by $f$ if and only if its intersection with $K_X+\Delta+L$ is 0. Furthermore, to check $K_X+\Delta+L$ is semi-ample over $U$, it suffices to show that the restriction $(K_X+\Delta+L)|_{{\rm Ex}(f)}$ is semi-ample over  $U$ where
 ${\rm Ex}(f)$ is the exceptional locus  of $f:X\to Z$. 


Besides using Keel's results, our approach also relies heavily on the recent results on minimal model program (MMP) in dimension three in positive characteristics (see \cite{HX13,Birkar13}). More precisely, when $p>5$, combining with Shokurov's idea (cf. \cite{Shokurov92}) of reduction to special MMP and the recent development of lifting sections coming from Frobenius image initiated in \cite{HH90} (see \cite{Schwede11} and references therein),
 it is proved that an MMP sequence  can be run in a generalized sense in \cite{HX13}  (cf. Section \ref{ss-gmmp}), where the coefficients are also assumed to be contained in the standard  set 
$$\{\frac{n-1}{n}|n\in \mathbb{N}\}.$$ 
Later,  using Shokurov's reduction technique again, \cite{Birkar13}  removes the restriction on the coefficients by reducing the general case to the one in \cite{HX13}.  Using this existence of a sequence of generalized MMP, after passing to an \'etale covering of  the algebraic space $Z$ given by \cite{Keel99}, a standard trick by running MMP can change the model $X$ over $Z$, such that  the exceptional locus ${\rm Ex}(f)$ is contained in $\lfloor \Delta \rfloor$ for a plt pair $K_X+\Delta$.  Then Theorem \ref{t-main1} follows from Keel's theorem in the semi-ample case and the abundance for surfaces.

Now we want to apply Theorem \ref{t-main1} to get the standard consequences on birational contractions when we run MMP for a klt pair $(X,\Delta)$. Let $R$ be a $(K_X+\Delta)$-negative extremal ray over $U$, and $L=K_X+\Delta+H$ be  a nef divisor  for some ample $\QQ$-divisor $H$ such that 
$$L^{\perp}\cap \overline{NE}(X/U)=R.$$
Assume $L$ is big over $U$, then it follows from \cite{Keel99}
that $L$ is relatively EWM over $U$ with a birational contraction $f:X\to Z$ to an algebraic space.

By Theorem \ref{t-main1}, we obtain the contraction theorem, i.e., we can run  minimal model program (MMP) in the original sense. 
\begin{thm}\label{c-mmp}
Under the above notation. Assume $k$ is an algebraically closed field of characteristic $p>5$. Assume $h:(X,\Delta)\to U$ to be a klt pair projective over a quasi-projective variety $U$.  Then 
\begin{enumerate}
\item $Z$ is a quasi-projective variety, $\rho(X)-\rho(Z)=1$, and 
\item if  $f$ is small, then there exists  a flip $f^+:X^+\to Z$.
\end{enumerate}
\end{thm}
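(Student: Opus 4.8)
The plan is to read both statements off from Theorem~\ref{t-main1}, Keel's EWM construction, and the existence of three-fold flips in \cite{HX13,Birkar13}. For part~(1), I first apply Theorem~\ref{t-main1} with the ample divisor $H$ playing the role of the nef and big divisor there: since $H$ is ample it is big and nef over $U$, and $K_X+\Delta+H=L$ is big and nef over $U$ by hypothesis, so $L$ is semi-ample over $U$. For $m$ sufficiently divisible, $|mL|$ defines a projective morphism $f\colon X\to Z:=\operatorname{Proj}_U\bigoplus_{m\ge 0}h_*\mathcal{O}_X(mL)$ with $L=f^*A$ for a $\mathbb{Q}$-divisor $A$ on $Z$ that is ample over $U$; thus $Z$ is projective over $U$, hence a quasi-projective variety over $k$. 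A vertical curve $C$ is contracted by $f$ exactly when $L\cdot C=0$, i.e.\ when $[C]\in L^{\perp}\cap\overline{NE}(X/U)=R$, so $f$ contracts precisely the curves of the ray $R$; by uniqueness of the EWM contraction this $f$ coincides with Keel's map and identifies his algebraic space $Z$ with the variety just built.

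Since the contracted curve classes all lie in the one-dimensional ray $R$, we have $\overline{NE}(X/Z)=R$ and hence $\rho(X/Z)=1$. To pass to $\rho(X)-\rho(Z)$ I would establish the exact sequence
\[
0\longrightarrow N^1(Z)_{\mathbb{R}}\xrightarrow{\,f^*\,}N^1(X)_{\mathbb{R}}\longrightarrow N^1(X/Z)_{\mathbb{R}}\longrightarrow 0 .
\]
Injectivity of $f^*$ is the usual push-pull argument (a class pulling back to $0$ meets every curve of $Z$ in degree $0$). For exactness on the right I must show that a class $D$ with $D\cdot R=0$ descends to $Z$; here I would invoke Theorem~\ref{t-main1} again. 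For $n\gg 0$ both $nL\pm D$ are nef and big and of the form $(K_X+\Delta)+\bigl((n-1)L+H\pm D\bigr)$ with big and nef second summand, so each is semi-ample and $R$-trivial; the associated contraction therefore contracts the fibres of $f$ and factors through $f$, exhibiting $nL\pm D$ as a pullback from $Z$. Subtracting shows $D$ is numerically a pullback, giving $\rho(X)-\rho(Z)=\rho(X/Z)=1$.

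For part~(2), suppose $f$ is small. Because $L=K_X+\Delta+H$ is $f$-trivial and $H$ is $f$-ample, $-(K_X+\Delta)=H-L$ is $f$-ample, so $f$ is a $(K_X+\Delta)$-flipping contraction. I would then define
\[
f^+\colon X^+:=\operatorname{Proj}_Z\bigoplus_{m\ge 0}f_*\mathcal{O}_X\bigl(\lfloor m(K_X+\Delta)\rfloor\bigr)\longrightarrow Z ,
\]
which is the flip precisely when the displayed relative canonical algebra is a finitely generated $\mathcal{O}_Z$-algebra. Now that $Z$ is a genuine quasi-projective variety by part~(1), this finite generation is exactly the existence of three-fold flips in characteristic $p>5$, proved via Frobenius-lifting and $F$-singularity techniques in \cite{HX13} and extended to arbitrary coefficients in \cite{Birkar13}; applying that result to $f$ produces the small morphism $f^+$ with $K_{X^+}+\Delta^+$ relatively ample over $Z$.

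I expect the essential difficulty to sit in the finite generation of part~(2): constructing flips is the deep input, and in positive characteristic it cannot lean on Kawamata--Viehweg vanishing but must instead use the lifting of sections from Frobenius images, as in \cite{HX13,Birkar13}. The numerical descent in part~(1) is comparatively formal once semi-ampleness is in hand, and indeed it is itself powered by a second application of Theorem~\ref{t-main1}.
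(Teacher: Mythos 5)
Part (1) of your proposal is essentially the paper's argument: you apply Theorem~\ref{t-main1} to $L=K_X+\Delta+H$ to realize $Z$ as $\operatorname{Proj}$ of the section ring, and then descend any $R$-trivial class by perturbing $L$ and applying Theorem~\ref{t-main1} again. The paper does the same with $L+\epsilon L'$ in place of your $nL\pm D$; the only thing you leave implicit is why the perturbation stays nef, which requires Keel's relative Cone Theorem (Remark~\ref{r-relkeel}) to get local polyhedrality of $\overline{NE}(X/U)$ near $R$ --- the paper cites this explicitly. That part is fine.

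Part (2) has a genuine gap. You define $X^+$ as $\operatorname{Proj}_Z$ of the relative log canonical algebra and assert that its finite generation ``is exactly the existence of three-fold flips in \cite{HX13}, extended in \cite{Birkar13}.'' But within the logical framework of this paper, that is not an available black box: the Frobenius/$F$-singularity techniques of \cite{HX13} produce flips only for contractions of \emph{special type} (pl-flips, where some component $S_i\subset\lfloor\Delta\rfloor$ satisfies $R\cdot S_i<0$), and what \cite{HX13,Birkar13} provide for an arbitrary klt flipping contraction is only a \emph{generalized flip}, i.e.\ a birational model $X'\to Z$ on which $K_{X'}+\Delta'$ is merely nef over $Z$, not relatively ample. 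Upgrading this nef model to the actual flip is precisely the content of part (2), so your citation is circular. The paper closes the loop differently: it takes the generalized flip $f'\colon X'\to Z$, chooses $H_Z$ ample on $Z$ so that $K_{X'}+\Delta'+f'^*H_Z$ is big and nef with all trivial curves vertical over $Z$, applies Theorem~\ref{t-main1} to conclude this divisor is semi-ample, and sets
$$X^+:=\operatorname{Proj}\bigoplus_{m\ge 0} f'_*\mathcal{O}_{X'}\bigl(m(K_{X'}+\Delta'+f'^*H_Z)\bigr),$$
whose structure morphism to $Z$ is the flip. In other words, the deep input is the special flip plus the base point free theorem just proved, not a pre-existing general flip theorem; your proposal needs to be rerouted through the generalized flip and Theorem~\ref{t-main1} at this step.
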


Therefore, we see  the concept of `generalized MMP' is not needed anymore. But it is still  conceptually  important as an intermediate step.  In other words, unlike in characteristic 0, where  the base point free theorem was established much earlier than the existence of flips, to prove the base point free theorem in characteristic $p>0$, the experience suggests that we might need to first establish everything for the MMP of `special type' in the sense of Shokurov. 

The note is organized in the following way: we discuss some preliminary results in Section \ref{s-pre}, especially we state Keel's theorems in \cite{Keel99} in the relative case. Then in Section \ref{ss-gmmp}, we survey the results in \cite{HX13,Birkar13} on running a generalized MMP which we will need. Finally, we finish the proof of Theorem \ref{t-main1} in Section \ref{s-proof}.


\bigskip

\noindent{\bf Acknowledgement:}  We would like to thank Caucher Birkar for the communications. We started to write the preprint after Caucher Birkar sent us the first draft of his work \cite{Birkar13}, where the generalized MMP was extended to arbitrary coefficients. Later when we finished this preprint, he sent us his second draft in which the main Theorem \ref{t-main1} is proved independently by a different method. We also want to thank Bhargav Bhatt, Hiromu Tanaka for helpful suggestions and the anonymous referee for many useful remarks on the exposition. CX is partially supported by the Chinese grant `Recruitment Program of Global Experts' and Qiushi Outstanding Young Scholarship.

\bigskip

\noindent {\bf Notation and Conventions}: We follow the notation as in \cite{KM98}. For any divisor $\Gamma$ on a normal variety $X$ and a birational map $X\dasharrow X'$ to a normal variety $X'$, we will denote by $\Gamma_{X'}$ to be its birational transform  on $X'$. For $f:X\to Z$ a birational morphism, we denote by ${\rm Ex}(f)$ or ${\rm Ex}(X/Z)$ to be the exceptional set.

\section{Preliminary}\label{s-pre}

In this section, we discuss some backgrounds. We note that resolution of singularities is known in dimension 3 in arbitrary characteristic (see \cite{Ab98, Cutkosky09, CP08, CP09}). 

\subsection{Basic facts}\label{s-basic}

\begin{lem}[Negativity Lemma]\label{l-neg} 
Let $f:X\to X'$ be a proper birational morphism from a quasi-projective normal variety to a normal algebraic space. Let $E$ be an effective $f$-nef $\mathbb{Q}$-divisor which is exceptional over $X'$, then $E=0$.   
\end{lem}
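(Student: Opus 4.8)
The plan is to reduce the assertion to the classical fact that the intersection form of the exceptional curves of a proper birational morphism of normal surfaces is negative definite. I would argue by contradiction: suppose $E\neq 0$, write $E=\sum_i a_iE_i$ with $a_i>0$ and $E_i$ the distinct prime ($f$-exceptional) components, and set $Z=f(\Supp E)$, so that $\dim Z\le \dim X-2$. (I take $E$ to be $\mathbb{Q}$-Cartier, as is implicit in the hypothesis that it be nef.)

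First I would dispose of the algebraic-space hypothesis. Since every hypothesis and the conclusion $E=0$ are \'etale-local on $X'$, and a separated finite-type algebraic space admits a surjective \'etale cover $V\to X'$ by an affine scheme, I would base-change $f$ along such a cover. This preserves properness, birationality and normality, carries $E$ to an effective $f$-nef exceptional $\mathbb{Q}$-divisor $E_V$, and satisfies $E_V=0 \iff E=0$ because the cover is faithfully flat and surjective. After this step I may assume $X'$ is a normal quasi-projective (indeed affine) variety; this is the only place the algebraic-space hypothesis is used.

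Next I would reduce to the case $\dim Z=0$ by repeatedly cutting the base. If $\dim Z\ge 1$, I would choose a general hyperplane section $H'\subset X'$ through a general point of a top-dimensional component of $Z$, and pass to the birational transform $\widetilde H\subset X$ of $H'$, the surface $H'$, and the restricted morphism $f|_{\widetilde H}\colon \widetilde H\to H'$. By Bertini for normality (valid in every characteristic) $\widetilde H$ is again normal, $E|_{\widetilde H}$ stays effective, nonzero (choosing $H'$ through $Z$), exceptional over $H'$ and $f|_{\widetilde H}$-nef, while $\dim f(\Supp E|_{\widetilde H})\le \dim Z-1$. Iterating brings me to the situation where $Z$ is a finite set of points. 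I would then cut the source: with $n=\dim X$ and $A$ very ample on $X$, the intersection $S=H_1\cap\dots\cap H_{n-2}$ of general members of $|A|$ is a normal surface, and \emph{precisely because $Z$ is now $0$-dimensional}, each curve $E_i\cap S$ has image in the finite set $Z$ and is therefore contracted by $f|_S$. After normalizing the image, $g\colon S\to S'$ is a proper birational morphism of normal surfaces for which $E|_S=\sum_j a_jC_j$ (with $a_j>0$ and $C_j$ its distinct prime components) is a nonzero effective $g$-exceptional and $g$-nef $\mathbb{Q}$-divisor.

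Finally I would invoke the surface case. Since the exceptional curves of $g$ have negative definite intersection form (Mumford--Grauert--Artin, valid in all characteristics), $(E|_S)^2<0$; but $g$-nefness gives $E|_S\cdot C_j=E\cdot C_j\ge 0$ for each $j$, whence
\[
(E|_S)^2=\sum_j a_j\,(E|_S\cdot C_j)\ge 0,
\]
a contradiction, forcing $E=0$. I expect the main obstacle to lie not in this surface computation, which is classical, but in making the two reductions rigorous: the \'etale-local replacement of the algebraic space $X'$ by a scheme (and the ensuing bookkeeping of quasi-projectivity), and the positive-characteristic Bertini arguments guaranteeing that the iterated hyperplane sections stay normal and that the sliced morphisms remain birational with the exceptionality, $f$-nefness, and non-vanishing of $E$ all intact.
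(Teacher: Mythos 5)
Your proposal is correct and is essentially the argument the paper itself invokes: the paper's entire proof is the single remark that ``the usual argument (see [KM98, 3.39]) of cutting $X$ by hyperplanes still holds in this situation,'' and your write-up is just that classical reduction (slice down to a normal surface, contract the relevant curves to points, and use negative definiteness of the exceptional intersection form), with a preliminary \'etale base change to replace the algebraic space $X'$ by an affine scheme. The delicate points you flag at the end (positive-characteristic Bertini for normality of the slices, bookkeeping after the \'etale cover) are exactly the details the paper also leaves implicit, so there is no divergence of method.
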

\begin{proof}It suffices to observe that the usual argument  (see \cite[3.39]{KM98}) of cutting $X$ by hyperplanes still holds in this situation.
\end{proof}

\begin{lem}\label{l-snc}
Let $(X,D)$ be a simple normal crossing pair, and $|H|$ a very ample linear system. Then there exists a prime divisor $A\in |H|$, such that $(X,D+A)$ is simple normal cr ossing.
\end{lem}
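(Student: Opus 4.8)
The plan is to produce $A$ as a sufficiently general member of $|H|$, using the embedding $X\hookrightarrow\PP^N$ defined by (a subsystem of) $|H|$, so that members of $|H|$ are exactly the hyperplane sections $X\cap H_\xi$ for $\xi\in(\PP^N)^\vee$. Since $(X,D)$ is simple normal crossing, $X$ is smooth and $D=\sum_i D_i$ with each $D_i$ smooth and the $D_i$ crossing transversally. Let $\{S_j\}$ be the finite collection of strata of $(X,D)$, i.e.\ the irreducible components of the intersections $D_{i_1}\cap\cdots\cap D_{i_k}$, including the empty intersection $X$ itself; each $S_j$ is smooth, irreducible and quasi-projective. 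The reduction rests on an elementary local fact: at a point $x$ where exactly $D_1,\dots,D_r$ pass, with local equations $x_1,\dots,x_r$ forming part of a regular system of parameters, a prime divisor $A=\{g=0\}$ through $x$ makes $(X,D+A)$ simple normal crossing at $x$ precisely when $dg\notin\langle dx_1,\dots,dx_r\rangle$ in $\mathfrak m_x/\mathfrak m_x^2$; equivalently, when $A$ meets the stratum $S=D_1\cap\cdots\cap D_r$ transversally at $x$. Thus it suffices to find $A\in|H|$ meeting every stratum $S_j$ transversally, transversality to the top stratum $X$ being the smoothness of $A$ itself.

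The main point, and the only place where positive characteristic enters, is a Bertini-type transversality statement. The hard part is that the naive Bertini smoothness theorem can fail in characteristic $p$; I would avoid this by working directly with conormal varieties and never invoking biduality, which is exactly what breaks in char $p$. For each smooth quasi-projective stratum $S_j\subseteq\PP^N$ form its conormal variety $C_{S_j}\subseteq\PP^N\times(\PP^N)^\vee$; it is a $\PP^{N-1-\dim S_j}$-bundle over $S_j$, hence of dimension exactly $N-1$, so its image $S_j^\vee\subseteq(\PP^N)^\vee$, the set of hyperplanes tangent to $S_j$ at some point, has dimension $\le N-1$ and is contained in a proper closed subset. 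This dimension count holds in every characteristic. Choosing $\xi$ in the nonempty open set $(\PP^N)^\vee\setminus\bigcup_j\overline{S_j^\vee}$ yields a hyperplane $H_\xi$ transverse to all strata; this is the characteristic-free form of Bertini transversality, for which one may also cite Jouanolou's treatment.

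It then remains to upgrade such a transverse $H_\xi$ to a prime divisor making $(X,D+A)$ simple normal crossing. Transversality to $X$ gives that $A=X\cap H_\xi$ is smooth; transversality to each $S_j$ gives, by the local computation above, the independence of $dx_1,\dots,dx_r,dg$ at every point, i.e.\ $(X,D+A)$ is simple normal crossing. Moreover, since $\dim X\ge 2$ in our application, the characteristic-free Bertini irreducibility theorem lets us shrink the set of admissible $\xi$ so that $A$ is irreducible, hence a prime divisor; transversality to each $D_i$ forces $A\ne D_i$, so $D+A$ is reduced with the correct components. Intersecting the finitely many nonempty open conditions on $\xi$ produces the desired $A$. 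I expect the only genuine obstacle to be the positive-characteristic Bertini input, which the conormal-variety dimension count settles without appealing to biduality.
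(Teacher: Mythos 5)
Your proof is correct and follows essentially the same route as the paper's: both reduce the statement to a characteristic-free Bertini theorem applied to each of the finitely many strata of $(X,D)$ and then intersect the resulting finitely many open conditions on $|H|$. The only differences are cosmetic --- the paper cites Hartshorne's Bertini theorem for the restricted linear system on each positive-dimensional stratum, whereas you reprove that input via the conormal-variety dimension count; you also make explicit the irreducibility of $A$ and the treatment of zero-dimensional strata, which the paper leaves implicit.
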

\begin{proof}Write $D=\sum^N_{i=1} D_i$, where $D_1,...,D_N$ are prime divisors. For any $I\subset \{1,2,...,N\}$, let  
$$D_I^j\subset \bigcap_{i\in I}D_i$$
be an irreducible component of positive dimension. We denote by  $|H|_{{D^j_I}}$ the restricted linear system, i.e., the one corresponds to the image of the restriction map 
$$h^j_I: H^0(X,H)\to   H^0({D^j_I},H|_{D^j_I}).$$
By Bertini theorem (see \cite[II 8.18]{Hart}), we know that there exists an open subset 
$V^j_I\subset |H|_{{D^j_I}}$ such that  any element $A^j_I\in V^j_I$ corresponds to a smooth hypersurface of $D_I^j$. For any $D^j_I$, by definition
$$f^j_I: |H|\setminus Z^j_I\to  |H|_{{D^j_I}}$$
is surjective, where $Z^j_I$ is the proper linear subsystem given by the kernel of $h^j_I$. Thus, we can take 
$$H\in \bigcap_{I,j}(f^j_I)^{-1}(V^j_I).$$
\end{proof}

\begin{prop}[{Bertini Theorem}]\label{p-bertini}
 Let $(X,\Delta)$ be a quasi-projective klt pair and $A$ an ample $\mathbb{Q}$-divisor. Assume $f:Y\to (X,\Delta)$ is a log resolution, such that the exceptional locus ${\rm Ex}(f)$ supports a relatively anti-ample divisor $E$. Then there  exists a $\mathbb{Q}$-divisor $\Delta'$, such that $\Delta'\sim_{\mathbb{Q}}\Delta+A $ and $(X,\Delta')$ is klt. Futhermore, in dimension three, we can show the same result only assuming $A$ to be big and nef. 
\end{prop}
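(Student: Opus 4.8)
The plan is to reduce the statement, via the given log resolution $f$, to the simple normal crossing Bertini theorem of Lemma~\ref{l-snc}, using that $-E$ is $f$-ample to manufacture a very ample linear system on $Y$ whose general member pushes down to the desired perturbation of $\Delta$. This is forced by the failure of generic smoothness in characteristic $p$: one cannot simply cut $X$ by a general member of $|mA|$, but one can cut $Y$ and control discrepancies.

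First, I treat the case $A$ ample. Since $-E$ is $f$-ample and $A$ is ample, for $m\gg 0$ (divisible enough that $mA$ is Cartier) the divisor $f^*(mA)-E$ is very ample on $Y$, by the standard fact that twisting an $f$-ample invertible sheaf by the pullback of a sufficiently large power of an ample bundle yields a very ample bundle. Let $D\subset Y$ be the simple normal crossing divisor consisting of the strict transform $\Delta_Y$ of $\Delta$ together with all $f$-exceptional divisors. By Lemma~\ref{l-snc} applied to $(Y,D)$ and $|f^*(mA)-E|$, there is a prime divisor $A_Y\in|f^*(mA)-E|$ with $(Y,D+A_Y)$ simple normal crossing. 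I set $\Delta':=\Delta+\tfrac1m f_*A_Y$, so $\Delta'\sim_\QQ\Delta+A$ since $f_*A_Y\sim mA$. To see $(X,\Delta')$ is klt, note $A_Y+E\in|f^*(mA)|$, and because $f_*\sO_Y=\sO_X$ the projection formula gives $H^0(Y,f^*(mA))=H^0(X,mA)$; hence $A_Y+E=f^*G$ for some $G\in|mA|$, and applying $f_*$ yields $G=f_*A_Y$ and therefore $f^*f_*A_Y=A_Y+E$. As $f$ is also a log resolution of $(X,\Delta')$ (its strict transform of $\Delta'$ is $\Delta_Y+\tfrac1m A_Y$, which is snc with the exceptional locus by construction), each exceptional discrepancy is
\[
a(E_i;X,\Delta')=a(E_i;X,\Delta)-\tfrac1m\,\mult_{E_i}(f^*f_*A_Y)=a(E_i;X,\Delta)-\tfrac1m\,\mult_{E_i}(E),
\]
while the coefficients of $\Delta'$ along non-exceptional divisors are those of $\Delta$ (all $<1$) together with $\tfrac1m<1$. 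Since $\mult_{E_i}(E)$ is independent of $m$ and $a(E_i;X,\Delta)>-1$, taking $m\gg 0$ makes all discrepancies $>-1$, so $(X,\Delta')$ is klt.

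For the three-dimensional statement with $A$ only big and nef, I would reduce to the ample case by Kodaira's lemma: write $A\sim_\QQ H_1+D_1$ with $H_1$ ample and $D_1\ge 0$, and for $n\ge 1$ set $A_n:=(1-\tfrac1n)A+\tfrac1n H_1$, which is ample precisely because $A$ is nef; then $A\sim_\QQ A_n+\tfrac1n D_1$ with $D_1$ fixed. Using that resolution of singularities is available in dimension three (as recalled at the start of this section), choose one fixed log resolution $g:Y'\to X$ of $(X,\Delta+D_1)$ whose exceptional locus supports a $g$-anti-ample divisor $E'$, so the strict transforms of $\Delta$ and $D_1$ together with ${\rm Ex}(g)$ form a simple normal crossing divisor. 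Carrying out the construction above for the ample divisor $A_n$ on this fixed $g$ produces, for $m\gg0$, a member $A_{Y'}\in|g^*(mA_n)-E'|$ and a boundary $B_n:=\tfrac1m g_*A_{Y'}\sim_\QQ A_n$; I then set $\Delta':=\Delta+\tfrac1n D_1+B_n\sim_\QQ\Delta+A$. The discrepancy of each exceptional divisor $E_i'$ of the fixed morphism $g$ is
\[
a(E_i';X,\Delta')=a(E_i';X,\Delta)-\tfrac1n\,\mult_{E_i'}(g^*D_1)-\tfrac1m\,\mult_{E_i'}(E').
\]
Because $g$ is fixed there are finitely many $E_i'$, so the numbers $\mult_{E_i'}(g^*D_1)$ are uniformly bounded; choosing $n$ large enough to push the middle term below the positive gap $\min_i\bigl(1+a(E_i';X,\Delta)\bigr)$, and then $m$ large to absorb the last term, keeps every discrepancy $>-1$, and the non-exceptional coefficients remain $<1$ for $n\gg0$. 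Hence $(X,\Delta')$ is klt.

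The main obstacle is exactly the absence of generic smoothness in positive characteristic, which is what routes the entire argument through the resolution and Lemma~\ref{l-snc}. The delicate bookkeeping point in the ample case is the identity $f^*f_*A_Y=A_Y+E$, which keeps the exceptional correction $\mult_{E_i}(E)$ independent of $m$; without it the discrepancy estimate would degenerate as $m\to\infty$. In the big and nef case the additional subtlety is the interdependence between the ample approximation $A_n$ and the error $\tfrac1n D_1$, which I resolve by fixing the resolution $g$ of $(X,\Delta+D_1)$ once and for all — legitimate only because resolutions exist in dimension three, and this is where the dimension hypothesis is genuinely used — so that the relevant multiplicities range over a finite set and the two perturbation parameters $n$ and $m$ can be chosen in succession.
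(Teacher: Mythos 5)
Your proposal is correct and follows essentially the same route as the paper: perturb on the resolution $Y$ by $\tfrac1m(E+A_Y)$ with $A_Y$ a general member of the very ample system $|f^*(mA)-E|$ chosen via Lemma~\ref{l-snc}, push forward, and check discrepancies on $Y$ (the paper writes this as $\epsilon E+H$ with $H\sim_{\QQ}f^*A-\epsilon E$, i.e.\ your construction with $\epsilon=1/m$), and the big-and-nef case is likewise reduced to the ample case by Kodaira's lemma using the existence of log resolutions in dimension three. Your explicit verification of $f^*f_*A_Y=A_Y+E$ and the careful ordering of the parameters $n$ and $m$ only make explicit what the paper leaves implicit.
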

\begin{proof}By negativity lemma (see \cite[3.39]{KM98}), we know that $E \ge 0$. Since it is relatively anti-ample, we have ${\rm Supp}(E)={\rm Ex}(f)$.

Write $f^*(K_X+\Delta)=K_Y+\Delta_Y$ ($\Delta_Y$ may not be effective). Then by the assumption we know  that $ (Y,{\rm Supp }(\Delta_Y+{\rm Ex}(f)))$ is simple normal crossing. Since $(X,\Delta)$ is klt, if we write $\Delta_Y=\sum_i a_iE_i$, then $a_i<1$. There exists a sufficiently small $\epsilon>0$ such that $f^*A-\epsilon E$ is ample on $Y$ and the coefficients of $\Delta_Y+\epsilon E$ are strictly less than 1.

Then we can choose $H\sim_{\mathbb{Q}}f^*A-\epsilon E$, such that  the coefficients of $\Delta_Y+\epsilon E+H$ are strictly less than 1 and $ (Y,{\rm Supp }(\Delta_Y+E+H))$ is simple normal crossing (cf. Lemma \ref{l-snc}). Thus for any divisorial valuation $v$ of $K(X)$, its  discrepancy $a(v,Y, \Delta_Y+\epsilon E+H)>-1$.
Therefore, we can choose $\Delta'=f_*(\Delta_Y+\epsilon E+H).$

In dimension three, log resolution of any pair exists and as it is obtained by a sequence of blow ups with smooth centers, we know that the exceptional locus of such a log resolution always supports a relatively anti-ample divisor $E$. We can write $A\sim_{\mathbb{Q}}A'+G$ for an ample $\QQ$-divisor $A'$ and a sufficiently small effective $\QQ$-divisor $G$, such that
$(X,\Delta+G)$ is still klt. Then we only need to apply the case of $A$ being ample. 
 \end{proof}

\begin{prop}\label{p-curve}
Let $X$ be a connected variety. If $\pi:X_1\to X$ is a finite flat morphism, and $\pi^*L$ is trivial for a line bundle $L$ on $X$, then $L$ is torsion in ${\rm Pic}(X)$.
\end{prop}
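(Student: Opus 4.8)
The plan is to extract a norm, realized concretely as the determinant of the finite flat pushforward, and to show that it sends the trivial bundle $\pi^*L$ to a power of $L$. First I would record the basic structural facts. Since $\pi$ is finite it is affine, so $\pi_*\MO_{X_1}$ is coherent; since it is also flat over the variety $X$, the sheaf $\pi_*\MO_{X_1}$ is locally free. Its rank is locally constant, and because $X$ is connected it is a well-defined integer $d$. Moreover $d\ge 1$: a finite flat morphism is open (by flatness) and closed (by finiteness), so its image is a nonempty clopen subset of the connected space $X$, hence all of $X$; thus $\pi$ is surjective and the generic fiber is nonempty, giving $d\ge 1$.

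The key computation is an application of the projection formula followed by taking determinants. Because $L$ is invertible and $\pi_*\MO_{X_1}$ is locally free, the projection formula gives
$$\pi_*\pi^*L\;\cong\;\pi_*\MO_{X_1}\otimes L.$$
Writing $E=\pi_*\MO_{X_1}$, a locally free sheaf of rank $d$, and using the standard identity $\det(E\otimes L)\cong \det(E)\otimes L^{\otimes d}$ for a line bundle $L$, I obtain
$$\det\bigl(\pi_*\pi^*L\bigr)\;\cong\;\det(\pi_*\MO_{X_1})\otimes L^{\otimes d}.$$

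Now I would feed in the hypothesis $\pi^*L\cong \MO_{X_1}$. Then $\pi_*\pi^*L\cong \pi_*\MO_{X_1}$, so the left-hand side of the displayed isomorphism is simply $\det(\pi_*\MO_{X_1})$. Cancelling the common invertible factor $\det(\pi_*\MO_{X_1})$ in ${\rm Pic}(X)$ yields $L^{\otimes d}\cong\MO_X$, which exhibits $L$ as a torsion element of order dividing $d$, as desired.

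There is no deep obstacle here; the content is almost entirely organizational, and the two places needing genuine care are both structural rather than computational. The first is justifying that $\pi_*\MO_{X_1}$ is locally free of a single well-defined rank $d$, which is exactly where the connectedness hypothesis on $X$ is used (and where one must confirm $d\ge 1$ so that ``torsion'' is meaningful). The second is ensuring that the determinant line bundle is applied to genuinely locally free sheaves, so that the determinant is defined and the final cancellation in ${\rm Pic}(X)$ is legitimate; once the projection formula is invoked for a locally free $E$, everything reduces to a formal manipulation of line bundles.
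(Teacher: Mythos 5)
Your proof is correct. It reaches the same conclusion, $L^{\otimes d}\cong\MO_X$ with $d=\deg\pi$, but by a slightly different route: the paper invokes the norm map ${\rm Nm}\colon{\rm Pic}(X_1)\to{\rm Pic}(X)$ for finite flat morphisms (citing EGA II, 6.5.1) together with its property ${\rm Nm}(\pi^*L)=L^{\otimes d}$, whereas you bypass the norm construction entirely and work directly with the determinant of the pushforward, using the projection formula $\pi_*\pi^*L\cong\pi_*\MO_{X_1}\otimes L$ and the identity $\det(E\otimes L)\cong\det(E)\otimes L^{\otimes d}$, then cancelling the invertible factor $\det(\pi_*\MO_{X_1})$. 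The two arguments are close cousins: your determinant computation is essentially the proof of the norm identity in the special case of a pulled-back line bundle, so your version is self-contained and avoids the EGA black box, at the cost of not producing the norm as a group homomorphism on all of ${\rm Pic}(X_1)$ (which is not needed here). Your attention to the preliminary points --- that $\pi_*\MO_{X_1}$ is locally free because it is coherent and flat over the locally Noetherian $X$, that connectedness pins down a single rank $d$, and that $d\ge 1$ because a finite flat map onto a connected base is surjective --- is exactly the bookkeeping the paper leaves implicit, and it is all accurate (granting, as the context guarantees, that $X_1$ is nonempty).
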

\begin{proof}Since by our assumption $\pi_*N$ is locally free on $X$ for any line bundle $N$ on $X$, then we can define (see \cite[6.5.1, hypothesis I]{EGA2})
$${\rm Nm}: {\rm Pic}(X_1)\to {\rm Pic}(X),$$ with the property that ${\rm Nm}\circ \pi^*(L)=L^{\otimes d}$, where $d=\deg(\pi)$. As ${\rm Nm}(\mathcal{O}_{X'})=\mathcal{O}_X$, we conclude that $L$ is a torsion in ${\rm Pic}(X)$. 
\end{proof}

\subsection{Relative version of Keel's theorem}
For the relative setting of proper morphism between quasi-projective varieties, we can take a projectivization and then work in the category of projective varieties. For our purpose, we would like to directly treat it,  which requires us to generalize Keel's theorems to the relative case for $X\to Z$ a proper morphism between quasi-projective varieties. The argument is essentially verbatim. 

\begin{defn}Let $f:X\to Z$ be a proper morphism between quasi-projective schemes. Let $L$ be a relatively nef line bundle. For a  subvariety $W\subset X$,  we denote the Stein factorization $W\to V\to Z$. We say that $W$ is {\it relatively exceptional} (for $L$) if  $L^{\dim W_{\eta}}|_{W_{\eta}}=0$, where $\eta$ is the generic point of $V$. We denote by $\mathbb{E}(L/Z)$  the Zariski closure of the union of all relatively exceptional varieties (with reduced structure).
\end{defn}

\begin{defn} With the above notation, we say that $L$ is {\it endowed with a map over $Z$} (or {\it EWM} over $Z$), if there is a morphism  $g:X\to Y$ to an algebraic space over $Z$ with the property that a subvariety $W$ is contracted by $g$ if and only if it is relatively exceptional.
\end{defn}

Then we have the following relative version of Keel's theorem.

\begin{prop}\label{p-relkeel}
Let $f:X\to Z$ be a proper morphism between quasi-projective schemes. Let $L$ be a nef line bundle on $X$. Then $L$ is EWM (resp. semi-ample) over $Z$ if and only if $L|_{\E(L)}$ is EWM (resp. semi-ample) over $Z$.
\end{prop}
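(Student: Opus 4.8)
The two ``only if'' implications are immediate by restriction: if $L$ is semi-ample (resp.\ EWM) over $Z$, witnessed by the relative evaluation $f^{*}f_{*}L^{m}\to L^{m}$ (resp.\ by a contraction $g\colon X\to Y$ to an algebraic space over $Z$), then restricting the resulting $Z$-morphism to the closed subscheme $\E(L)$ exhibits $L|_{\E(L)}$ as semi-ample (resp.\ EWM) over $Z$. Here one uses that $\E(L|_{\E(L)})=\E(L)$: the defining condition $L^{\dim W_{\eta}}|_{W_{\eta}}=0$ for a relatively exceptional $W$ involves only $L|_{W}$, and every relatively exceptional subvariety lies in $\E(L)$ by definition. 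This makes the statement consistent (applying it to $L|_{\E(L)}$ changes nothing) and reduces the problem to the ``if'' directions.

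For the ``if'' directions my main plan is to run the proof of \cite{Keel99} in the relative category, checking that each ingredient survives replacing the ground field by the quasi-projective base $Z$. Concretely one replaces global sections $H^{0}(X,L^{m})$ throughout by the coherent sheaves $f_{*}L^{m}$ on $Z$, ``ample'' by ``$f$-ample'', ``semi-ample'' by ``semi-ample over $Z$'', and maps to projective schemes or algebraic spaces by $Z$-morphisms to algebraic spaces over $Z$. Keel's mechanism is then unchanged: on $X\smallsetminus\E(L)$ the bundle $L$ is relatively almost ample, in the sense that the only curves vertical over $Z$ on which $L$ is numerically trivial are contained in $\E(L)$; and the characteristic-$p$ input (construction of the contraction via Frobenius, descent of $L$, and the ampleness criterion used in the semi-ample case) is cohomological and compatible with $f_{*}$, so it goes through verbatim. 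One glues the contraction on $\E(L)$ supplied by the hypothesis to the relatively almost ample locus to obtain $g\colon X\to Y$ over $Z$, and in the semi-ample case descends $L$ to $Y$ and applies Keel's relative ampleness criterion over $Z$.

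An alternative is to reduce to Keel's projective theorem by compactification, but I expect this to be the delicate route. Choosing a projective $Z\hookrightarrow\bar Z$ and, since $f$ is proper and $X$ is quasi-projective, taking the closure of $X$ inside $\PP^{N}\times\bar Z$ produces a projective $\bar X$ with $X$ open, $\bar f^{-1}(Z)=X$, and one can extend $L$ to some $\bar L$ corrected by a multiple of $\bar f^{*}A$ ($A$ ample on $\bar Z$). The relative-over-$\bar Z$ statement for $\bar L$ then reduces to the absolute theorem via the identity $\E(\bar L+n\bar f^{*}A)=\E(\bar L/\bar Z)$ for $n\gg0$, and absolute semi-ampleness (resp.\ an absolute contraction) restricts to the relative notion over any base after pulling back along $Z\hookrightarrow\bar Z$.

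The main obstacle is the same boundary phenomenon in both routes. In the verbatim approach one must verify that the algebraic space $Y$ produced by gluing is genuinely an algebraic space over $Z$ and that the vanishing behind the ampleness criterion holds with $f_{*}$ in place of $H^{0}$; both are routine once $Z$ is Noetherian and separated, which it is. The real difficulty sits in the compactification: the hypothesis only provides semi-ampleness of $L|_{\E(L)}$ over the open base $Z$, whereas applying Keel over $\bar Z$ demands control of $\bar L|_{\E(\bar L/\bar Z)}$ over all of $\bar Z$, including the boundary $\bar Z\smallsetminus Z$, where $\E(L)$ need not be closed and $\bar L$ is hard to pin down. Arranging the extension $\bar L$ so that it is nef on $\bar X$ while creating no new relatively exceptional locus over the boundary is precisely the bookkeeping the author avoids by treating the relative case directly, and it is why I would favour the verbatim relativization of \cite{Keel99}.
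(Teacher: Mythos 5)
Your proposal is correct and follows essentially the same route as the paper: both treat the ``only if'' direction as immediate and prove the converse by relativizing Keel's original argument verbatim, replacing global sections by $f_*$ and absolute (semi-)ampleness by the relative notions. The paper's sketch just makes the intermediate reductions slightly more explicit --- passing to $X_{\mathrm{red}}$ via the iterated Frobenius, splitting $X$ into $\mathbb{E}(L/Z)$ and the components where $L$ is big, and then invoking \cite{Keel99}*{2.6, 1.6, 1.10} together with Artin's algebraization to produce the contraction --- all of which are contained in your ``verbatim relativization'' plan.
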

\begin{proof} The proof just follows Keel's original one. We will briefly sketch it for the reader's convenience. 
First, as the natural morphism $X_{\rm red}\to X$ can factor through the iterated geometric Frobenius
$$F^{q}:X \to X^{(q)}_{\rm red} \to X^{(q)}$$
for some $q=p^r$ $(r\gg 0)$ (see \cite[6.6]{Kol95}) and $(F^q)^*(L)=L^{\otimes q}$, we can assume that $X$ is reduced. 

We write $X=X_1\cup X_2$, where $X_1={\mathbb{E}(L/Z)}$ and $X_2$ is the union of components of $X$, on which the restriction of  $L$ is big. If $X_2\neq X$, from the induction, we can assume that $L|_{X_2}$ is EMW (resp. semi-ample). Now the proof of \cite[2.6]{Keel99} also works in this relative setting. 

Therefore, we can assume that $L$ is big on $X$, i.e., 
$$nL\sim_{{Z}}A+D$$ for some $n\in \mathbb{N}$, an ample divisor $A$ and an effective divisor $D$. As the proof of \cite[1.6]{Keel99}, we can use \cite[3.1, 6.2]{Artin70} to construct an algebraic space $Z$ such that $f:X\to Z$ is the endowed map for $L$.

In the case of $L|_{\E(L)}$ being semi-ample, the same argument as in \cite[1.10]{Keel99} implies that $L$ is also semi-ample.
\end{proof} 
\begin{rem}When $Z$ is an algebraic space, the part to conclude that $f$ is EWM if $f|_{\E(L)}$ is EMW follows from a standard descent argument. But we do not know whether the statement for semi-ampleness holds or not. 
\end{rem}

We need the following  statement. 
\begin{cor}\label{c-des}Let $X$ and $Y$ be quasi-projective normal varieties which are projective over a quasi-projective $Z$ with a birational morphism $f:X\to Y$ over $Z$. Let $\Delta$ be a $\mathbb{Q}$-divisor on $X$ such that $(X,\Delta)$ is a dlt pair.
 Let  $ S\subset \lfloor\Delta\rfloor$ be a normal prime divisor such that  $-(K_X+\Delta)|_S$ is $f$-ample and $C\cdot S<0$
for any contracted curve $C$. If $L$ is a line bundle on $X$ such that $L\cdot C=0$ for any contracted curve $C$, then
$L\sim_{\mathbb{Q}}f^*L_Y$ for some $\mathbb{Q}$-line bundle $L_Y$ on $Y$. 
\end{cor}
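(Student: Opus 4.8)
The plan is to descend $L$ first along the divisor $S$, where all the nontrivial geometry of $f$ is concentrated, and then to globalize the descent to $Y$ by means of the Negativity Lemma. First I would record the elementary consequence of the hypothesis $C\cdot S<0$: if a contracted curve $C$ were not contained in $S$, then $S|_C$ would be effective and $C\cdot S\ge 0$, a contradiction. Hence every contracted curve lies in $S$, so ${\rm Ex}(f)\subseteq S$ and $f$ is an isomorphism over $Y\setminus f(S)$. In particular the contracted curves are exactly the curves contained in the fibers of $f|_S$, and the hypothesis $L\cdot C=0$ says precisely that $L$ is $f$-numerically trivial.

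Next I would descend $L|_S$. By adjunction $(K_X+\Delta)|_S=K_S+\Delta_S$ with $(S,\Delta_S)$ a dlt, hence normal, surface pair, and $-(K_S+\Delta_S)$ is $(f|_S)$-ample by hypothesis. Since $L|_S$ is $(f|_S)$-numerically trivial, the divisor $L|_S-(K_S+\Delta_S)$ is $(f|_S)$-ample, in particular relatively nef and big. Applying the base point free theorem (abundance) for surfaces in the relative setting, which is available over an arbitrary field, $L|_S$ is $(f|_S)$-semi-ample; being relatively numerically trivial, the associated relative fibration factors $f|_S$ through a finite morphism, so that $L|_S\sim_{\mathbb{Q}}(f|_S)^*L_T$ for some $\mathbb{Q}$-Cartier class $L_T$ on $T:=f(S)$. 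At this step I expect to need Proposition \ref{p-curve} to descend the relatively ample class across the (possibly inseparable) finite morphism and to guarantee, modulo torsion, that $L_T$ is an honest $\mathbb{Q}$-line bundle.

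Finally I would globalize. Set $L_Y:=f_*L$, a rank one reflexive sheaf, i.e. a Weil $\mathbb{Q}$-divisor class on $Y$. The key point is that $L_Y$ is $\mathbb{Q}$-Cartier: this is automatic over $Y\setminus f(S)$, where $f$ is an isomorphism, while along $f(S)$ it is forced by the surface descent $L|_S\sim_{\mathbb{Q}}(f|_S)^*L_T$. Granting this, consider $E:=f^*L_Y-L$. Then $f_*E=0$, so $E$ is $f$-exceptional and supported on $S$, and $E\cdot C=0$ for every contracted curve $C$, so both $E$ and $-E$ are $f$-nef. Applying the Negativity Lemma (Lemma \ref{l-neg}) to each of $E$ and $-E$ forces $E=0$, that is $L\sim_{\mathbb{Q}}f^*L_Y$, as desired.

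The main obstacle is the $\mathbb{Q}$-Cartierness of $L_Y=f_*L$ along $f(S)$, equivalently the passage from ``$L$ is $f$-numerically trivial'' to ``$L$ is $\mathbb{Q}$-linearly a pullback.'' In characteristic zero this is a routine consequence of the contraction being the morphism attached to the relevant extremal face, but in characteristic $p$ the relative Picard group can acquire torsion from wild and inseparable behaviour along the fibers; controlling this is exactly the purpose of Proposition \ref{p-curve} together with the surface descent on the dlt pair $(S,\Delta_S)$, and this is where I expect the real work to lie.
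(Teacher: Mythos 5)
Your first two steps track the paper's argument: the observation that $C\cdot S<0$ forces ${\rm Ex}(f)\subseteq S$, and the use of adjunction plus abundance for surfaces to get $L|_S\sim_{\mathbb{Q}}0$ relatively over $f(S)$ (the paper handles the possible non-normality of $f(S)$ by passing to $f(S)^{\rm n}$, which is a universal homeomorphism since $S\to f(S)$ has connected fibres, and then descends via \cite[6.6]{Kol95}; your appeal to Proposition \ref{p-curve} is not quite the right tool there, since the normalization need not be flat, but this is repairable).

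The genuine gap is in your globalization step. You set $L_Y:=f_*L$ and assert that its $\mathbb{Q}$-Cartierness along $f(S)$ ``is forced by the surface descent $L|_S\sim_{\mathbb{Q}}(f|_S)^*L_T$.'' That implication is precisely the nontrivial content of the corollary, and it does not follow formally: knowing that $L$ descends on the divisor $S$ does not by itself produce a $\mathbb{Q}$-Cartier class on $Y$ pulling back to $L$ (in characteristic $p$ there is no relative Kawamata--Viehweg vanishing to glue local sections, and for a small contraction there is not even an exceptional divisor on which to run your Negativity Lemma argument --- indeed the Negativity Lemma step is moot once $\mathbb{Q}$-Cartierness is granted, and useless before it is). The paper closes exactly this gap with Proposition \ref{p-relkeel} (the relative form of Keel's theorem): since every contracted curve lies in $S$, one has $\mathbb{E}(L/Y)\subseteq S$, so $L|_{\mathbb{E}(L/Y)}$ is semi-ample over $Y$ because $L|_S\sim_{\mathbb{Q},f(S)}0$; Keel's theorem then upgrades this to $L$ being semi-ample over $Y$, and a relatively semi-ample, relatively numerically trivial $\mathbb{Q}$-line bundle is relatively $\mathbb{Q}$-trivial, i.e.\ of the form $f^*L_Y$. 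You correctly identified where the real work lies, but the work itself --- Keel's descent mechanism --- is missing from your argument.
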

\begin{proof}As we assume $S$ to be normal, we can write $(K_X+\Delta)|_S=K_S+{\rm Diff}_S\Delta.$ Thus
$$L|_S-S-{\rm Diff}_S\Delta$$ is ample over $Y$ and $(S, {\rm Diff}_S\Delta)$ is dlt. 
Since $S\to f(S)$ has connected fibers, we know that the normalization morphism $f(S)^{\rm n}\to f(S)$ is finite and universal homeomorphism. Therefore, it follows from the abundance theorem for log canonical surfaces (see e.g \cite[15.2]{Tanaka12}) that
$L|_S\sim_{\mathbb{Q},f(S)^{\rm n}} 0$, which implies $L|_S\sim_{\mathbb{Q},f(S)} 0$ by \cite[6.6]{Kol95}.
Thus by Proposition \ref{p-relkeel},
$L$ is $\mathbb{Q}$-linearly equivalent to 0 over $Y$, i.e.
$$L\sim_{\mathbb{Q}}f^*(L_Y)$$
for some $\mathbb{Q}$-line bundle on $L_Y$.
\end{proof}

\begin{rem}\label{r-relkeel}
We also remark that Keel's Cone Theorem \cite[5.5.2]{Keel99} holds for the relative setting $X/U$, where $X\to U$ is a projective morphism to a quasi-projective variety. His original proof can be directly applied without any change. 
\end{rem}


\section{Running generalized MMP for threefolds}\label{ss-gmmp}  

In this section, we assume $k$ is an algebraically closed field with ${\rm char}(k)>5$.  We provide a short sketch of the proof on the results from \cite{HX13,Birkar13} which we will need.
Since we still need to prove the base point free theorem now, a priori we can not run MMP in the original sense. In \cite{HX13}, a notion called {\it generalized MMP} was invented. 
In fact, for a three dimensional klt pair $(X,\Delta)$,  using Shokurov's idea in \cite{Shokurov92} of reducing MMP to `special' MMP, it is proved  (see \cite{HX13,Birkar13})  that we can always run a generalized MMP over $Z$ in one of the following two cases:
\begin{enumerate}
\item[(I)] when $X$ is projective over some quasi-projective variety $U$, $X\to Z$ is the relative endowed map for some big and nef divisor $L$ over $U$; or
\item[(II)] when $Z$ is quasi-projective.
\end{enumerate}

\begin{defn}\label{d-gmmp}
For a dlt pair $(X,\Delta)/U$, let $S=\lfloor \Delta \rfloor$. We call an  extremal birational contraction $X\to Y$ induced by a $(K_X+\Delta)$-negative ray $R$ of $NE(X/U)$ to be of  {\it  special type}, if $R\cdot S_i<0$ for some component $S_i\subset S$.
\end{defn}

If follows from \cite{Keel99} (and Proposition \ref{p-relkeel})  that if $(X,\Delta)$ is a 3-dimensional dlt pair projective over a quasi-projective variety $U$ and $K_X+\Delta\sim_{\mathbb{Q},U} E\ge0 $, then there exists an ample divisor $H$ such that
$$(K_X+\Delta+H)^{\perp}\cap  \overline{NE}(X/U)=R.$$
If we assume $X$ is $\mathbb{Q}$-factorial, then each component $S_i$ of $\lfloor \Delta \rfloor$ is normal by \cite[4.1]{HX13}. Furthermore, if $R\cdot S_i<0$, Proposition \ref{p-relkeel} and the arguments used in Corollary \ref{c-des} imply that the extremal contraction $X\to Z$ exists and we get $Z$ as a quasi-projective variety.  If $X\to Z$ is small, then it follows from \cite[Theorem 1.1]{HX13} that the flip $X^+\to Z$ exists. Thus, if we start from a $\mathbb{Q}$-factorial dlt three dimensional pair $(X,\Delta)/U$, and we assume that we run a sequence of special type MMP such that each time the contraction is birational, e.g., 
$X\to U$ is birational, then we will have a sequence of models
$$(X,\Delta)=(X_0,\Delta_0)\dasharrow (X_1,\Delta_1)\dasharrow (X_2,\Delta_2)\cdots\dasharrow (X_n,\Delta_n),$$
where Corollary \ref{c-des} can be applied to show that each model $X_i$ is $\mathbb{Q}$-factorial. 
Special termination (see \cite[4.2.1]{Fujino07} or \cite[4.7]{Birkar13}), which holds in this case, implies that the MMP will end with a relative minimal model $X_n$ over $Z$.

Let first introduce the definition of a generalized MMP.  

\begin{defn}\label{d-gen}
Let $(X,\Delta)$ be a klt pair which is projective over a quasi-projective variety $U$  and $R$ an extremal ray.  Let $f:X\to Z$ be the birational extremal contraction corresponding to a $(K_X+\Delta)$-negative extremal ray 
$$R=\mathbb{R}_{\ge 0}[C] \subset  \overline{NE}(X/U)$$ with the target space possibly being an algebraic space, i.e., $Z$ is a normal algebraic space, $f:X\to Z$ is the endowed map (by Proposition \ref{p-relkeel}) for a big and nef line bundle $L=K_X+\Delta+H$ satisfying 
$$L^{\perp}\cap \overline{NE}(X/U)=R.$$ 
We say that $(X^+,\Delta^+)$ is  {\it a step of the generalized MMP} if $X\dasharrow X^+$ is birational and $\Delta=\phi^+_*\Delta$ and $K_{X^+}+\Delta^+$ is nef over $Z$.
When $\phi^+:X\dasharrow X^+$ is isomorphic in codimension one, we call it a {\it generalized flip}.

\end{defn}

\begin{thm}[\cite{HX13,Birkar13}]
Let $k$ be an algebraically closed field with ${\rm char}(k)>5$. Assume $X$ to be a $\mathbb{Q}$-factorial threefold. Let $(X,\Delta)\to U$ be a  klt pair projective over a quasi-projective variety $U$.   Assume 
$f:X\to Z$ is given as in Definition \ref{d-gen}, then a step of generalized MMP always exists, i.e., we can always find $X^+$ as in Definition \ref{d-gen}.
\end{thm}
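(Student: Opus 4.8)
The plan is to reduce the existence of a single generalized MMP step for the (possibly non-special) extremal contraction $f:X\to Z$ to the special-type MMP package surveyed above, following Shokurov's reduction. First I would record that, since $L=K_X+\Delta+H$ is numerically trivial on the curves of $R$ and $f$ contracts exactly the relatively exceptional curves, we have $K_X+\Delta\equiv_Z -H$ with $H$ ample; hence $-(K_X+\Delta)$ is $f$-ample, and producing $X^+$ amounts to constructing a relative log minimal model of $(X,\Delta)$ over $Z$, which is the generalized flip when $f$ is small. The difficulty is precisely that $R$ need not be negative on a component of $\lfloor\Delta\rfloor$, so the contraction of Definition \ref{d-gen} is not directly of special type in the sense of Definition \ref{d-gmmp}.

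To remedy this I would pass to an auxiliary $\mathbb{Q}$-factorial dlt pair. Take a log resolution $h:Y\to X$ of $(X,\Delta)$ and form $(Y,\Gamma)$ from the birational transform of $\Delta$ together with a carefully chosen boundary on the $h$-exceptional locus, so that $\lfloor\Gamma\rfloor$ contains the $h$-exceptional prime divisors while $K_Y+\Gamma\equiv_X\sum_i(1+a_i)E_i\ge 0$ is $h$-exceptional (using that $(X,\Delta)$ is klt, so every discrepancy $a_i>-1$). This is where Shokurov's idea enters: the boundary must be arranged so that the entire $(K_Y+\Gamma)$-MMP over $Z$ proceeds through extremal rays that are negative on a component of $\lfloor\Gamma\rfloor$, that is, through contractions of special type.

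Granting this, I would run the $(K_Y+\Gamma)$-MMP over $Z$ as a sequence of special-type steps. Each such contraction exists with a quasi-projective target by Proposition \ref{p-relkeel} and the argument of Corollary \ref{c-des}; when the step is small the flip exists by \cite[Theorem 1.1]{HX13}; and each intermediate model stays $\mathbb{Q}$-factorial by Corollary \ref{c-des}. Special termination (see \cite[4.2.1]{Fujino07} or \cite[4.7]{Birkar13}), which applies here, guarantees that this sequence stops at a relative minimal model $(Y',\Gamma')$ over $Z$, so $K_{Y'}+\Gamma'$ is nef over $Z$. Finally I would contract the transforms of the auxiliary boundary divisors to descend from $(Y',\Gamma')$ to the desired $X^+$ with $\Delta^+=\phi^+_*\Delta$ and $K_{X^+}+\Delta^+$ nef over $Z$, using Corollary \ref{c-des} and the negativity lemma (Lemma \ref{l-neg}) to match discrepancies.

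The main obstacle is the auxiliary construction together with its descent: one must choose $\Gamma$ so that every extremal ray encountered by the MMP over $Z$ is genuinely of special type — this is the heart of Shokurov's reduction and is what makes the surveyed special-type package applicable — and then verify that the relative minimal model $Y'$ contracts precisely the added divisors and recovers $(X^+,\Delta^+)$ rather than some other model. A secondary point demanding care is that $Z$ is only an algebraic space, so each existence and descent statement must be invoked in its relative Keel form (Proposition \ref{p-relkeel}, Corollary \ref{c-des}) rather than over a projective base.
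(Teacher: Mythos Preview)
Your proposal sketches the Shokurov ``reduction to pl-type'' philosophy, but as written it has a genuine gap and also misses the structure of the paper's argument.

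\textbf{The coefficient issue is absent.} The special-type package you invoke---extremal contraction via Proposition~\ref{p-relkeel}, flips via \cite[Theorem 1.1]{HX13}, special termination---is established in \cite{HX13} only when the boundary coefficients lie in the standard set $\{\frac{n-1}{n}\}\cup\{1\}$. Your auxiliary pair $(Y,\Gamma)$ keeps the birational transform of $\Delta$ with its original, possibly non-standard, coefficients, so you cannot cite \cite[Theorem 1.1]{HX13} for the flips that arise. The whole point of the theorem (following \cite{Birkar13}) is precisely to remove this coefficient restriction; the paper does so by induction on the number of non-standard coefficients. Writing $\Delta=\sum_{i=1}^m a_i\Delta_i$, one assumes the result for coefficients in $I_0=\{\frac{n-1}{n}\}\cup\{a_1,\dots,a_{m-1}\}\cup\{1\}$ and then treats the last component $\Delta_m$ separately, splitting into the cases $\Delta_m\cdot R\ge 0$ and $\Delta_m\cdot R<0$. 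In the first case one runs the (inductively available) generalized MMP for $(X,\Delta-a_m\Delta_m)$ over $Z$ and observes the output is automatically a model for $(X,\Delta)$; in the second case $\Delta_m$ itself plays the role of the distinguished divisor $T$ in the construction of \cite[5.6]{HX13}, and the subsequent MMP with scaling of $T$ has coefficients in $I_0$ and is of special type. Your outline contains no analogue of this induction.

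\textbf{The ``every ray is special type'' step is not justified.} Even setting coefficients aside, simply putting the $h$-exceptional divisors into $\lfloor\Gamma\rfloor$ does not force every $(K_Y+\Gamma)$-negative extremal ray over $Z$ to meet $\lfloor\Gamma\rfloor$ negatively. Once those exceptional divisors are contracted you may land back on a model crepant to $(X,\Delta)$ facing the original non-special ray $R$. Shokurov's reduction works because one runs an MMP \emph{with scaling of a specific divisor $T$} chosen so that $T\cdot R<0$; it is this scaling, not the mere presence of reduced boundary, that keeps every step special. Your proposal acknowledges this as ``the heart of Shokurov's reduction'' but supplies no mechanism. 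Likewise, the final ``descent'' from $Y'$ to $X^+$ by contracting auxiliary divisors is itself an MMP step that needs justification, and the negativity lemma alone does not produce it.
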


\begin{proof}

 If the coefficients of $\Delta$ are contained in $\{\frac{n-1}{n}|n\in \mathbb N\}\cup \{1\}$, then this follows from \cite[5.6]{HX13}. We remark that since we have relative Cone Theorem \ref{r-relkeel}, relative Contraction Theorem \ref{p-relkeel} and special flip, then the argument  in \cite[5.6]{HX13}  holds in this relative setting. 

In  general, we repeat the argument of \cite{Birkar13}, which reduces the general case to to the above case of standard coefficients as following:

Write $\Delta=\sum^{m}_{i=1}a_i\Delta_i$, where $\Delta_1,...,\Delta_m$ are distinct prime divisors. We will show that if we can run a generalized MMP for any klt pair $(X',\Delta')$ birational over $Z$ with coefficients in 
$$ I_0:=\{\frac{n-1}{n}|n\in \mathbb N\}\bigcup \{a_1,...,a_{m-1}\}\bigcup \{1\} ,$$
then we have a step of the generalized MMP for $K_X+\Delta$, hence the theorem follows from the induction.

As part of our induction, we also assume that a step of the generalized MMP for the coefficients in $I_0$ is given by
a  sequence of birational models,
$$X=X_0\dasharrow X_1\dasharrow X_2....\dasharrow X_n=X^+,$$
where $X_i\dasharrow X_{i+1}$ is one of the following two operations:
\begin{enumerate}
\item $X_{i+1}\to (X_i,\Delta_i)$ is  a log resolution for some divisor $\Delta_i$, or
\item $X_{i}\dasharrow X_{i+1}$ is a sequence of special MMP with standard coefficients, which can be run by \cite{HX13}. 
\end{enumerate}

Write $\Delta=\Delta_1+\Delta_2$, where $\Delta_1=\sum^{m-1}_{i=1}a_i\Delta_i$ and $\Delta_2=a_m\Delta_m$. 

 Assume $\Delta_m\cdot R\ge0$. Then we know that 
$\Delta_2\equiv_Z-t(K_X+\Delta_1)$ for some $t\in [0,1).$ By induction, we can run generalized MMP for $(X,\Delta_1)$ over $Z$, which provides a  sequence of birational models
$$X=X_0\dasharrow X_1\dasharrow X_2\dasharrow....\dasharrow X_n=X^+.$$
Using induction, we also know $X_i\dasharrow X_{i+1}$ where $X_i\dasharrow X_{i+1}$ is either a log resolution for some divisor $\Delta_i$, or
 a sequence of special MMP with standard coefficients.

In particular, by repeatedly applying Corollary \ref{c-des}, we see that  if there is a $\mathbb{Q}$-line bundle $L$ on $X$, such that $L\equiv_Z 0$, then there is a  $\mathbb{Q}$-line bundle $L^+$ on $X^+$, such that if we pull back $L$ and $L^+$ to a common resolution, we get two $\mathbb{Q}$-line bundles which are relatively $\mathbb{Q}$-linear equivalent to each other. Thus 
$$K_{X^+}+(\Delta_1)_{X^+}+(\Delta_2)_{X^+}\equiv (1-t)(K_{X^+}+(\Delta_1)_{X^+}),$$
which is nef. 

Now we assume $\Delta_2\cdot R<0$. We can apply the argument in \cite[5.6]{HX13} by choosing $T=\Delta_m$. Then we get a model $W$ as there such that $K_W+(\Delta_1)_W+E_W+T_W $ is nef over $Z$ where $E_W$ is the divisorial part of the exceptional locus ${\rm Ex}(W/Z)$. Now we run generalized MMP of $K_W+(\Delta_1)_W+E_W$ with scaling of $T_W$ over $Z$, which exists by our induction assumption. As argued in \cite[5.6]{HX13}, this is of special type. In particular, it terminates. By the definition of MMP with scaling, we know that it provides a model
$$W\dasharrow W_r=X^+,$$
such that $$K_{X^+}+(\Delta_1)_{X^+}+E_{X^+}+a_mT_{X^+} $$ is nef, where $E_{X^+}$ is the push forward of $E_W$ on $X^+$. Then the negativity lemma implies that $E_{X^+}=0$. 

In each case, we also see that the map $X\dasharrow X^+$ can be decomposed into maps of type (1) and (2).
\end{proof}



\begin{rem}\label{r-flop}

As we already pointed out in the argument, by Corollary \ref{c-des}, we know that if we have a $\mathbb{Q}$-line bundle $L$ on $X$, such that $L\equiv_Z 0$, then there is a  $\mathbb{Q}$-line bundle $L^+$ on $X^+$, such that the pull backs of $L$ and $L^+$ to a common resolution are  $\mathbb{Q}$-linear equivalent to each other.
\end{rem}

As a standard consequence of running generalized MMP and special termination, we know that
\begin{lem}\label{l-dlt}Assume $k$ is an algebraically closed field with ${\rm char}(k)>5$.
If $(X,\Delta)$ is a log canonical three dimensional pair, then a $\mathbb{Q}$-factorial dlt modification exists. 
\end{lem}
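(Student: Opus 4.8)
The plan is to construct the modification by running a $(K_W+\Gamma)$-MMP over $X$ for a suitable simple normal crossing model $(W,\Gamma)$, and to observe that every step of this MMP is automatically of special type, so that the existence-and-termination machinery assembled in this section applies directly. Throughout I use that $X$ is quasi-projective (the standing setting), so we are in case (II) above with the base $Z=X$.

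First I would take a log resolution $f\colon W\to X$ of $(X,\Delta)$, which exists in dimension three. Writing $E_1,\dots,E_N$ for the prime $f$-exceptional divisors, set
\[
\Gamma = f^{-1}_*\Delta + \sum_{i=1}^N E_i ,
\]
so that $(W,\Gamma)$ is simple normal crossing, hence $\mathbb{Q}$-factorial dlt. Recording the discrepancies $a_i=a(E_i,X,\Delta)$, the log canonical hypothesis gives $a_i\ge -1$, and
\[
K_W+\Gamma = f^*(K_X+\Delta) + F ,\qquad F=\sum_{i=1}^N (1+a_i)\,E_i\ge 0 ,
\]
where $F$ is effective and $f$-exceptional.

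Next I would run a $(K_W+\Gamma)$-MMP over $X$. The key observation is that each step is of special type in the sense of Definition \ref{d-gmmp}: if $R$ is a $(K_W+\Gamma)$-negative extremal ray over $X$, then since $K_W+\Gamma\equiv_X F$ with $F$ supported on the exceptional divisors contained in $\lfloor\Gamma\rfloor$, we must have $R\cdot E_i<0$ for some $i$, i.e.\ $R$ meets a component of $\lfloor\Gamma\rfloor$ negatively. Consequently, by the discussion preceding this lemma, the extremal contraction exists with quasi-projective target, the corresponding flip exists when the contraction is small, and each intermediate model stays $\mathbb{Q}$-factorial dlt by Corollary \ref{c-des} (with the relevant components of $\lfloor\Gamma\rfloor$ remaining normal). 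Since $W\to X$ is birational, every such contraction is birational, and special termination then guarantees that this MMP terminates with a model $g\colon Y\to X$ on which $K_Y+\Gamma_Y$ is nef over $X$.

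Finally, writing $K_Y+\Gamma_Y = g^*(K_X+\Delta) + F_Y$ with $F_Y\ge 0$ the birational transform of $F$, which remains $g$-exceptional and is $g$-nef, the Negativity Lemma \ref{l-neg} forces $F_Y=0$. Hence $g$ is crepant, the surviving exceptional divisors are precisely those with $a_i=-1$, and $(Y,\Gamma_Y)$ is $\mathbb{Q}$-factorial dlt with $\Gamma_Y=g^{-1}_*\Delta+\mathrm{Ex}(g)$, which is the desired dlt modification. The main point demanding care is the verification that the MMP never leaves the special-type regime, since the existence of contractions and flips and, above all, special termination are only available there; this is exactly what the choice $\Gamma=f^{-1}_*\Delta+\sum E_i$ secures, by placing the whole of $F$ inside $\lfloor\Gamma\rfloor$.
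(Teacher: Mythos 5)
Your argument is correct and is precisely the ``standard consequence of running generalized MMP and special termination'' that the paper asserts without writing out: extract all exceptional divisors with coefficient one on a log resolution, observe that the induced $(K_W+\Gamma)$-MMP over $X$ is automatically of special type because $K_W+\Gamma\sim_{\mathbb{Q},X}F\geq 0$ is supported in $\lfloor\Gamma\rfloor$, and kill $F$ at the end by the Negativity Lemma. Since the paper supplies no details for this lemma, your write-up is a faithful filling-in of the intended proof rather than a different route.
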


If we assume $K_X+\Delta$  is effective over $U$,  the termination of generalized MMP is proved in \cite{Birkar13} following the idea in \cite{Birkar07}. It is shown that the termination of a sequence of generalized flips in this case is implied by 3-dimensional ascending chain condition (ACC) of log canonical thresholds. Then ACC of three dimensional log canonical thresholds is a corollary of 2-dimensional global ACC (cf. \cite[18.21]{Kollar91} or \cite[Section 5]{HMX}). The latter was proved by Alexeev in \cite{Alexeev94}. 

To summarize, we have the following result which we need later.
\begin{thm}[\cite{HX13,Birkar13}]\label{t-mmp}
Assume $k$ to be an algebraically closed field with ${\rm char}(k)=5$. Let $(X,\Delta)$ be a klt  three dimensional quasi-projective pair with a proper morphism to $Z$ such that one of the the following cases holds: 
\begin{enumerate}
\item[(I)]  $X$ is projective over some quasi-projective variety $U$, $X\to Z$ is the relative endowed map for some big and nef divisor $L$ over $U$; or
\item[(II)] $Z$ is quasi-projective and $K_X+\Delta\sim_{\mathbb{Q},Z}E\ge 0.$
\end{enumerate}
Then we can run a generalized MMP of $(X,\Delta)$ over $Z$ to obtain a minimal model $(X^{\rm m},\Delta^{\rm m})$ of $(X,\Delta)$ over $Z$. 

Furthermore, if $L$ is a $\mathbb{Q}$-line bundle on $X$ such that $L\equiv_Z 0$, then there exists a $\mathbb{Q}$-line bundle $L_{X^{\rm m}}$ on $X^{\rm m}$ such that the pull backs of $L$ and $L_{X^{\rm m}}$ to a common resolution is $\mathbb{Q}$-linearly equivalent. 
 \end{thm}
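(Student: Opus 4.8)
The plan is to combine the per-step existence already established in the preceding theorem with a termination statement, and then to transport a numerically $Z$-trivial class along the resulting sequence. By that theorem each $(K_X+\Delta)$-negative step of the generalized MMP over $Z$ exists, and the birational map $X_i\dashrightarrow X_{i+1}$ it produces factors into operations of type (1) (a log resolution $X_{i+1}\to X_i$) and type (2) (a finite sequence of special-type divisorial contractions and flips, run with standard coefficients via \cite{HX13}). Type-(1) steps are single morphisms, and the type-(2) special MMP terminates by special termination (which holds in the setting discussed above); hence the whole process terminates as soon as the underlying sequence of generalized flips does. So the real content beyond the previous theorem is the termination of that sequence of flips, together with a restatement of the descent property.

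For termination of the generalized flips I would invoke the argument proved in \cite{Birkar13} (following the idea of \cite{Birkar07}): along such a sequence a suitable family of log canonical thresholds is monotone, and the $3$-dimensional ACC for log canonical thresholds — itself a consequence of Alexeev's $2$-dimensional global ACC — forces stabilization, which is incompatible with an infinite sequence of distinct flips. In case (II) the hypothesis $K_X+\Delta\sim_{\mathbb{Q},Z}E\ge 0$ supplies exactly the effectivity needed to set up these thresholds. In case (I), since $L$ is big the endowed map $f:X\to Z$ is birational, so I would run the generalized MMP over $Z$ with scaling of an ample divisor $H$ over $U$; for the initial scaling value $K_X+\Delta+\lambda H$ is effective over $U$, placing us in the effective case to which the cited termination applies. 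Getting this reduction right — arranging effectivity in case (I) and checking that the monotone-threshold argument is unaffected by the interspersed type-(1) and type-(2) operations — is the step I expect to be the main obstacle; existence is quoted verbatim from the previous theorem and the rest of termination is bookkeeping.

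Finally, for the ``furthermore'' part, given $L\equiv_Z 0$ I would construct $L_{X_i}$ on each model inductively, keeping $L_{X_i}\equiv_Z 0$ and keeping the pullbacks of consecutive models to a common resolution $\mathbb{Q}$-linearly equivalent. Across a type-(1) morphism $X_{i+1}\to X_i$ one simply pulls back. Across a type-(2) contraction every contracted curve $C$ is vertical over $Z$, so $L_{X_i}\cdot C=0$, and each special-type contraction carries a normal $S\subset\lfloor\Delta\rfloor$ with $-(K_{X_i}+\Delta)|_S$ relatively ample and $S\cdot C<0$; Corollary \ref{c-des} then lets $L_{X_i}$ descend to the (quasi-projective) contraction target, and for a flip one descends on the contraction side and pulls back on the flipped side. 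Iterating this along the whole MMP is precisely Remark \ref{r-flop}, and produces a $\mathbb{Q}$-line bundle $L_{X^{\mathrm{m}}}$ whose pullback agrees with that of $L$ on a common resolution. This last part is routine once termination is in hand.
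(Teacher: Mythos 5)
Your proposal matches the paper's treatment: the paper presents Theorem \ref{t-mmp} as a summary of the preceding section, obtaining per-step existence from the theorem just proved, termination from the ACC for three-dimensional log canonical thresholds as in \cite{Birkar13} (following \cite{Birkar07} and reduced to Alexeev's two-dimensional global ACC), and the ``furthermore'' clause from Remark \ref{r-flop} via Corollary \ref{c-des} --- exactly the three ingredients you assemble. Your handling of case (I) by scaling to reach the effective case is actually spelled out in more detail than in the paper, which leaves that reduction to the cited references.
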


\section{Proof}\label{s-proof}

In this section, we always assume $k$ is an algebraically closed field with ${\rm char}(k)>5$. Let $(X,\Delta)$  be a klt pair  and $L$ be a  big and nef  $\mathbb{Q}$-divisor such that $K_X+\Delta+L$ is also big and nef.  By Bertini type Theorem \ref{p-bertini}, after replacing $\Delta+L$ by $\Delta'$, we indeed can assume $K_X+\Delta$ is big and nef, and we aim to show that $K_X+\Delta$ is semi-ample.  Let $X\to Z$ be the endowed map for $K_X+\Delta$ provided by \cite{Keel99}.

\begin{lem}
Assume the above notation. There is a birational contraction $f:X\dasharrow Y$ over $Z$ such that $(Y,\Delta_Y=f_*(\Delta))$ is klt and $\dim\mathbb{E}(K_Y+\Delta_Y)=1$.
\end{lem}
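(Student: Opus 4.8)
We want a birational contraction $f:X\dasharrow Y$ over $Z$, where $X\to Z$ is the endowed map for the big and nef divisor $K_X+\Delta$, such that $(Y,\Delta_Y=f_*\Delta)$ stays klt and the exceptional locus $\mathbb{E}(K_Y+\Delta_Y)$ has dimension at most $1$. The philosophy, following Keel, is that to prove semi-ampleness it suffices to control $(K_X+\Delta)|_{\mathbb{E}}$; so we want to shrink $\mathbb{E}$ as much as possible by changing the birational model over $Z$, ideally reducing its dimension to a curve where we can invoke surface abundance.

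**The plan.** First I would set $L:=K_X+\Delta$, which is big and nef, and observe that $\mathbb{E}(L/Z)$ is the locus of relatively exceptional subvarieties — the divisorial part of $\mathbb{E}$ is where the contraction $X\to Z$ is genuinely non-isomorphic in codimension one. The idea is to run a generalized MMP over $Z$. Since $L=K_X+\Delta$ is big and nef over $Z$ (indeed $L\equiv_Z 0$ on the contracted locus, so $K_X+\Delta\sim_{\QQ,Z}E\ge 0$ by Keel's big case), we are in setting (II) of Theorem \ref{t-mmp}. Running this MMP over $Z$ terminates with a minimal model $(Y,\Delta_Y)$, and by the furthermore clause of Theorem \ref{t-mmp} the class $L$ transfers to a $\QQ$-line bundle $L_Y$ on $Y$ with $L_Y\equiv_Z 0$, matching $L$ on a common resolution. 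I would then check that $Y\to Z$ is still the endowed map for $L_Y$, using Remark \ref{r-flop} and Proposition \ref{p-relkeel}, so that $\mathbb{E}(L_Y/Z)=\mathbb{E}(K_Y+\Delta_Y)$ makes sense.

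**Reducing the dimension.** The heart of the matter is showing the MMP strictly contracts the divisorial part of $\mathbb{E}$. Each divisorial extremal contraction of the generalized MMP kills a divisor on which $K_X+\Delta$ is relatively trivial, i.e.\ a divisorial component of $\mathbb{E}(L/Z)$; since $Y$ is a relative minimal model, $K_Y+\Delta_Y$ is nef over $Z$ and $L_Y\equiv_Z 0$, so $\mathbb{E}(L_Y/Z)$ can no longer contain a divisor covered by $(K_Y+\Delta_Y)$-trivial curves that would give a negative ray. More precisely, any remaining divisor $D\subset \mathbb{E}(L_Y/Z)$ would be contracted by $Y\to Z$ with $L_Y|_D$ not big, and I would argue that such $D$ would force a $(K_Y+\Delta_Y)$-negative ray (via the cone theorem, Remark \ref{r-relkeel}), contradicting minimality — this uses that $K_Y+\Delta_Y\equiv_Z L_Y - (L_Y-K_Y-\Delta_Y)$ and that $L_Y\equiv_Z 0$, so relative triviality of $L_Y$ along $D$ couples to negativity of $K_Y+\Delta_Y$. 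Hence $\mathbb{E}(K_Y+\Delta_Y)$ has no divisorial components, giving $\dim\mathbb{E}(K_Y+\Delta_Y)\le 1$.

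**The main obstacle.** The delicate point is the last step: ruling out a divisorial component of $\mathbb{E}(K_Y+\Delta_Y)$ after the MMP terminates. On the minimal model $K_Y+\Delta_Y$ is nef over $Z$ and numerically trivial on fibers of $Y\to Z$, but a priori a divisor $D$ could still be relatively exceptional for $L_Y$ without supplying a negative extremal ray, so the clean numerical contradiction needs care — one must relate relative exceptionality of $L_Y$ (a $\dim D_\eta$-fold self-intersection vanishing) to the structure of the contraction $Y\to Z$, and argue that minimality together with $\dim Y=3$ forces the relatively exceptional locus onto a subvariety of dimension $\le 1$. I expect this to hinge on the observation that any relatively exceptional divisor would already have been contracted during the special-type MMP, so that the termination statement of Theorem \ref{t-mmp} is exactly what does the work; the remaining verification is checking that no new divisorial exceptional locus is created by the birational maps of type (1) and (2) in a way that survives to $Y$.
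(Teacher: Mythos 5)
There is a genuine gap at the heart of your argument: the MMP you propose to run does nothing. Since $X\to Z$ is by definition the endowed map for $L=K_X+\Delta$, a curve is contracted by $X\to Z$ if and only if $L$ meets it trivially; hence $K_X+\Delta\equiv_Z 0$, and in particular $K_X+\Delta$ is already nef over $Z$. There are no $(K_X+\Delta)$-negative extremal rays over $Z$, so $X$ is its own relative minimal model and the birational contraction you construct is the identity. For the same reason, your mechanism for killing the divisorial components of $\mathbb{E}$ fails: a divisor $D$ contracted by $X\to Z$ is covered by $(K_X+\Delta)$-\emph{trivial} curves, not $(K_X+\Delta)$-negative ones, so it never supplies a negative extremal ray and minimality yields no contradiction. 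You flag exactly this worry in your final paragraph (``a priori a divisor $D$ could still be relatively exceptional for $L_Y$ without supplying a negative extremal ray'') but never resolve it; this is not a routine verification but the missing idea.

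The paper's proof fixes this by perturbing the boundary before running the MMP. Let $S$ be the sum of the divisorial components of $\mathbb{E}(K_X+\Delta)$ and choose $\epsilon>0$ small enough that $(X,\Delta+\epsilon S)$ is still klt. Then $K_X+\Delta+\epsilon S\equiv_Z\epsilon S$, so the generalized MMP for $(X,\Delta+\epsilon S)$ over $Z$ (Theorem \ref{t-mmp}) is effectively an $S$-MMP: every negative ray $R$ satisfies $S\cdot R<0$, so only divisors contained in ${\rm Supp}(S)$ can be contracted. On the resulting relative minimal model $Y$, the divisor $\epsilon S_Y$ is effective, exceptional over $Z$ and nef over $Z$, so the negativity lemma (Lemma \ref{l-neg}) forces $S_Y=0$; that is, every component of $S$ is contracted and nothing else is. Since $X\dasharrow Y$ extracts no divisors, any divisorial component of $\mathbb{E}(K_Y+\Delta_Y)$ would be the strict transform of a divisorial component of $\mathbb{E}(K_X+\Delta)$, all of which lie in $S$ and are gone; hence $\dim\mathbb{E}(K_Y+\Delta_Y)\le 1$, and $(Y,\Delta_Y)=(Y,\Delta_Y+\epsilon S_Y)$ is klt because the MMP preserves this. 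The perturbation by $\epsilon S$ is the one idea your proposal is missing; without it no version of the ``run an MMP over $Z$'' strategy can get off the ground.
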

\begin{proof}
Let $S$ be the sum of divisorial components $\dim\mathbb{E}(K_X+\Delta)$.   Let $\epsilon>0$ be sufficiently small such that $K_X+\Delta_X+\epsilon S$ is klt. By Theorem \ref{t-mmp}, we can run generalized MMP of $(X,\Delta+\epsilon S)$ over $Z$ to obtain a  birational model $f:X\dasharrow Y$ such that $K_Y+\Delta_Y+\epsilon S_Y$ is nef over $Z$, where $\Delta_Y$ and $S_Y$ are the push forwards of $\Delta$ and $S$. 

From the assumption that $S\subset \dim\mathbb{E}(K_X+\Delta)$, we know that $S$ is exceptional over $Z$.  By negativity Lemma \ref{l-neg}, we know that the components of $S$ are precisely the divisors contracted by $X\dasharrow Y$.
\end{proof}

\begin{proof}[Proof of Theorem \ref{t-main1}]Replacing $X$ by $Y$, we can assume that  $\dim\mathbb{E}(K_X+\Delta)=1$. Replacing $X$ by its $\mathbb{Q}$-factorialization, then we can assume $X$ it is $\mathbb{Q}$-factorial.  By Keel's theorem,  if we denote by $L=K_X+\Delta$, then it suffices to show $L|_{\mathbb{E}(K_X+\Delta)}$ is semi-ample, i.e.,  for any closed point $p\in Z$, assume $C=f^{-1}(p)$ is a connected curve, then $L|_C$ is semi-ample. 

We first assume $p\in Z$ is contained in an quasi-projective open neighborhood of $Z$. After possibly replacing $Z$ by a neighborhood, we can assume $Z$ is quasi-projective and $C$ is the exceptional locus ${\rm Ex}(X/Z)$.

The following construction is standard in characteristic 0, which is a combination of running MMP after using X-method. 
\begin{prop}Assume $Z$ to be quasi-projective. 
 Locally over $Z$, we can find an effective $\mathbb{Q}$-divisor $H\sim_{\mathbb{Q}} L$ such that  if we define 
$$c={\rm lct}(X,\Delta;H)$$ to be the log canonical threshold along $C$, then
\begin{enumerate}
\item $(X,\Delta+cH)$ is plt along $C$.
\item There exists a $\mathbb{Q}$-factorial model $W/Z$ and a prime divisor $E$ on $W$, such that $(W,E+\Delta_W+cH_W)$ is plt 
where $\Delta_W$ and $H_W$ are the birational transforms $\Delta$ and $H$ on $W$, and if $p:U\to X$ and $q:U\to W$ is a common resolution, then
$$p^*(K_X+\Delta+cH)=q^*(K_W+\Delta_W+cH_W+E).$$ 
\item $-E_W$ is nef over $Z$. 
\end{enumerate}
\end{prop}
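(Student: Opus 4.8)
The plan is to run the classical X-method (tie-breaking) to manufacture a log canonical singularity along $C$, and then to extract the resulting log canonical place using the positive-characteristic MMP provided by Theorem \ref{t-mmp}. First I would work over a fixed affine neighborhood of $p\in Z$, so that $\mathbb{Q}$-linear equivalence over $Z$ is at our disposal and $C=\mathrm{Ex}(X/Z)$. Since $L=K_X+\Delta$ is big and nef and contracted by $f$ (so $L\cdot C=0$ and $L\equiv_Z 0$), the bigness of $L$ supplies many sections of $|mL|$; imposing a high multiplicity along $C$ (or at a closed point of $C$) costs asymptotically fewer conditions than bigness provides, so I can produce an effective $H\sim_{\mathbb{Q}}L$ whose log canonical threshold $c=\mathrm{lct}(X,\Delta;H)$ along $C$ is attained. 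At $c$ the pair $(X,\Delta+cH)$ is log canonical but not klt in a neighborhood of $C$. The first real task is tie-breaking: by perturbing $H$ within $|L|_{\mathbb{Q}}$ (again using bigness to have enough members) and adjusting coefficients, I would arrange that there is a \emph{unique} log canonical place $E$ over $C$, which is precisely the statement that $(X,\Delta+cH)$ is plt along $C$, giving (1).

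For (2) I would apply the dlt-modification existence of Lemma \ref{l-dlt} to the lc pair $(X,\Delta+cH)$. Because the only lc place is $E$, the minimal dlt modification $\pi\colon W\to X$ extracts exactly $E$ and is crepant, so on a common resolution $p\colon U\to X$, $q\colon U\to W$ one obtains $p^*(K_X+\Delta+cH)=q^*(K_W+\Delta_W+cH_W+E)$ with $E$ appearing with coefficient $1$; uniqueness of the lc place makes $(W,E+\Delta_W+cH_W)$ plt and $W$ is $\mathbb{Q}$-factorial. This yields (2).

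For (3) the key observation is that, since $H\sim_{\mathbb{Q},Z}L$ and $L\equiv_Z 0$, the crepant formula gives $K_W+\Delta_W+cH_W+E\sim_{\mathbb{Q},Z}(1+c)\pi^*L\equiv_Z 0$, so that $K_W+\Delta_W+cH_W\equiv_Z -E$ and $(W,\Delta_W+cH_W)$ is klt. I would then run the $(K_W+\Delta_W+cH_W)$-MMP over $Z$; because $\pi^*L$ is big and nef over $Z$ and $W\to Z$ contracts precisely the $\pi^*L$-trivial curves, $W\to Z$ is the endowed map for $\pi^*L$ and Theorem \ref{t-mmp}(I) applies, so this MMP runs and terminates. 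As $K_W+\Delta_W+cH_W+E\equiv_Z 0$ stays numerically trivial throughout, each step is simultaneously $(K_W+\Delta_W+cH_W+E)$-trivial, i.e. a flop for the full pair; hence crepancy and the plt property are preserved (Remark \ref{r-flop} together with the ``furthermore'' clause of Theorem \ref{t-mmp}). Moreover $E$ can never be the contracted divisor: a contracted curve $C'$ satisfies $(K_W+\Delta_W+cH_W)\cdot C'<0$, i.e. $E\cdot C'>0$, which is incompatible with $E$ being the (negative) exceptional divisor of a divisorial step. Thus $E$ survives and the output model has $K_W+\Delta_W+cH_W$ nef over $Z$, i.e. $-E$ nef over $Z$, which is (3). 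Renaming this model $W$ completes the construction.

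The main obstacle I anticipate is the X-method and tie-breaking themselves --- producing $H$ with a single lc place over $C$ so that the pair is genuinely plt --- since in characteristic $p$ one cannot lean on Nadel-type vanishing; fortunately this step is numerical (bigness estimates and the definition of discrepancy) and is characteristic-free, while the genuinely positive-characteristic input is confined to the extraction and the terminating MMP, both supplied by Lemma \ref{l-dlt} and Theorem \ref{t-mmp}. A secondary point requiring care is checking that the MMP needed for (3) really falls under the hypotheses of Theorem \ref{t-mmp}(I) via the endowed-map structure of $\pi^*L$, and that the crepant relation of (2) is not disturbed by the intervening flops.
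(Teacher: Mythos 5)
Your proposal is correct and follows essentially the same route as the paper: tie-breaking with the ample part of the big divisor $L$ to produce $H$ with a unique log canonical place over $C$, extracting that place crepantly via the dlt modification of Lemma \ref{l-dlt}, and then running the generalized MMP of Theorem \ref{t-mmp} for $K+\Delta+cH\equiv_Z -E$, whose steps are $E$-positive so that $E$ survives and $-E$ becomes nef on the resulting minimal model. The only cosmetic difference is that you justify the applicability of Theorem \ref{t-mmp} more explicitly (via the endowed-map structure over $Z$) than the paper does.
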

\begin{proof} 

As $L$ is big, we can write it as $A+B$ where $B\ge 0$ and $A$ is ample. It is  standard that after using $A$ to tie-and-break (see e.g. \cite[3.2.3]{HMX}), we can find a $\mathbb{Q}$-divisor $H\sim_{\mathbb{Q}}L$, such that for
$$c={\rm lct}(X,\Delta;H)$$
the log canonical threshold along $C$, the pair $(X,\Delta+cH)$ is log canonical with precisely one divisorial valuation $v$ such that the discrepancy $a(v;X,\Delta+H)=-1$.

 After possibly shrinking $Z$, applying Lemma \ref{l-dlt}, we can find $h:V\to X$ which is a $\mathbb{Q}$-factorial  dlt modification of $(X,\Delta+cH)$, i.e., $V$ is $\mathbb{Q}$-factorial and $(V,\Delta_V+cH_V+E)$ is dlt where $E$ being the divisorial part of ${\rm Ex}(h)$ which corresponds to the valuation $v$. Hence
 $$h^*(K_X+\Delta+cH)=K_V+\Delta_V+cH_V+E.$$ 
Then by Theorem \ref{t-mmp}, we can run a generalized MMP for 
 $$K_V+\Delta_V+cH_V\equiv_{Z}-E$$ over $Z$ as in Section \ref{ss-gmmp}, and we end up with a relatively minimal model $f_W: W\to Z$.  Since each step of the generalized MMP  is $E$-positive, $E$ is not contracted in $V\dasharrow W$.
 Then we define $E_W$ to be the push forward of $E$ on $W$.  \end{proof}

It follows from Remark \ref{r-flop} that $K_W+\Delta_W+cH_W+E_W$ is numerically trivial over $Z$. By \cite[4.1]{HX13} and the fact that $W$ is $\mathbb{Q}$-factorial, we know that $E_W$ is normal as $(W,E_W)$ is plt. Thus, restricting on $E_W$, we have that
$$K_W+\Delta_W+cH_W+E_W|_{E_W}=K_{E_W}+{\rm Diff}_{E_W}(\Delta_W+cH_W)$$ is numerically trivial and 
$(E_W, {\rm Diff}_{E_W}(\Delta_W+cH_W))$ is klt. Therefore it follows from the abundance for surfaces(cf. e.g. \cite{Tanaka12}) that 
$$K_{E_W}+{\rm Diff}_{E_W}(\Delta_W+cH_W)\sim_{\mathbb{Q}}0.$$

 Since $-E_W$ is nef over $Z$ we know that 
 $$E_W=f_W^{-1}(f_W(E_W))\supset {\rm Ex}(f_W),$$ hence  $K_W+\Delta_W+cH_W+E_W$ is $\mathbb{Q}$-linearly trivial over $Z$ by Proposition \ref{p-relkeel}, i.e., 
 $$K_W+\Delta_W+cH_W+E_W\sim_{\mathbb{Q}} f^*_W(A)$$
 for some $\mathbb{Q}$-line bundle $A$ on $Z$.
By Theorem \ref{t-mmp}, we know that
$$ K_Y+\Delta_Y+cH_Y+E\sim_{\mathbb{Q}}  K_X+\Delta+cH\sim_{\mathbb{Q}} f^*(A).$$
Thus $(K_X+\Delta)|_C\sim_{\mathbb{Q}}0$.

Now let us consider the general case. For $p=f(C)\subset Z$, let $\pi:\tilde{Z}\to Z$ be the \'etale map from a quasi-projective \'etale neighborhood of $p$. Let 
$$\pi^X: X\times_Z\tilde{Z}\to X$$ be the \'etale morphism. 
We denote by $C_1=(\pi^X)^{-1}(C)$ which is finite \'etale over $C$. 
Thus it follows from Proposition \ref{p-curve} that $\phi^*nL$ is trivial for $n$ sufficiently divisible.  
\end{proof}



\begin{proof}[Proof of \ref{c-mmp}] (1) The assertion that $Z$ is an algebraic variety follows from Theorem \ref{t-main1}.  In particular, we conclude that $L=f^*L_Z$ for some ample divisor $L_Z$. 
We also have the exact sequence
$$0\to {\rm Pic}(Z)_{\QQ}\to {\rm Pic}(X)_{\QQ}\to \QQ\to 0,$$
where the last morphism is given by the intersection with a curve class $[C]$ in $R$. In fact, if $L'$ is a line bundle on $X$ such that $L'\cdot [C]=0$, then it follows from Cone Theorem, for sufficiently small $\epsilon>0$, $L+\epsilon L'$ is still big and nef over $U$ and 
$$(L+\epsilon L')^{\perp}\cap \overline{NE}(X/U)=R.$$
Thus we conclude that $L+\epsilon L'$ is semiample, whose multiple will be a pull back of an ample $\QQ$-divisor on $Z$.  

For (2), since we know there exists a generalized flip $f':X'\to Z$, such that $K_{X'}+\Delta'$ is nef over $Z$ where $\Delta'$ is the push forward of $\Delta$ to $X'$.
It follows from Cone Theorem that we can find a sufficiently ample divisor $H_Z$ on $Z$, such that $K_{X'}+\Delta'+f'^*H_Z$ is big and nef and all $(K_{X'}+\Delta'+f'^*H_Z)$-trivial curves are vertical over $Z$. So $K_{X'}+\Delta'+f'^*H_Z$ is  base point free by Theorem \ref{t-main1}. Therefore, we can take
$$X^+:={\rm Proj}\bigoplus_{m=0} f'_*\mathcal{O}_{X'}(m(K_{X'}+\Delta'+f'^*H_Z) ),$$
which admits a morphism $f^+:X^+\to Z$ yielding the flip.  
\end{proof}

\begin{rem}The base point free theorem in characteristic 0 is proved for nef $L$ which can be written as  $L\sim_{\QQ}K_X+\Delta+A$ for a klt pair $(X,\Delta)$ and a big and nef  $\QQ$-divisor $A$. However, in characteristic $p>0$, due to the existence of inseparable morphisms, it is still mysterious on how to deal with the case that $L$ not being big. There are partial results $\kappa(L)=1$ or $2$ in \cite{CTX13} with more restrictive assumptions.  
\end{rem}


\begin{bibdiv}
\begin{biblist}

\bib{Ab98}{book}{
    AUTHOR = {Abhyankar, S. S.},
     TITLE = {Resolution of singularities of embedded algebraic surfaces},
    SERIES = {Springer Monographs in Mathematics},
   EDITION = {Second},
 PUBLISHER = {Springer-Verlag},
   ADDRESS = {Berlin},
      YEAR = {1998},
    }
	
\bib{Alexeev94}{article}{
    AUTHOR = {Alexeev, Valery},
     TITLE = {Boundedness and {$K^2$} for log surfaces},
   JOURNAL = {Internat. J. Math.},
  FJOURNAL = {International Journal of Mathematics},
    VOLUME = {5},
      YEAR = {1994},
    NUMBER = {6},
     PAGES = {779--810},
 }

\bib{Artin70}{article}{
    AUTHOR = {Artin, M.},
     TITLE = {Algebraization of formal moduli. {II}. {E}xistence of
              modifications},
   JOURNAL = {Ann. of Math. (2)},
    VOLUME = {91},
      YEAR = {1970},
     PAGES = {88--135},
    }


\bib{Birkar07}{article}{
    AUTHOR = {Birkar, Caucher},
     TITLE = {Ascending chain condition for log canonical thresholds and
              termination of log flips},
   JOURNAL = {Duke Math. J.},
  FJOURNAL = {Duke Mathematical Journal},
    VOLUME = {136},
      YEAR = {2007},
    NUMBER = {1},
     PAGES = {173--180},
  }

\bib{Birkar13}{article}{
   AUTHOR = {Birkar, Caucher},
    TITLE = {Existence of flips and minimal models for 3-folds in char p},
      NOTE = { arXiv:1311.3098},
      YEAR = {2013},
}

\bib{CP08}{article}{
    AUTHOR = {Cossart, Vincent},
    author= {Piltant, Olivier},
     TITLE = {Resolution of singularities of threefolds in positive
              characteristic. {I}. {R}eduction to local uniformization on
              {A}rtin-{S}chreier and purely inseparable coverings},
   JOURNAL = {J. Algebra},
  FJOURNAL = {Journal of Algebra},
    VOLUME = {320},
      YEAR = {2008},
    NUMBER = {3},
     PAGES = {1051--1082},
}

\bib{CP09}{article}{
    AUTHOR = {Cossart, Vincent},
    author={ Piltant, Olivier},
     TITLE = {Resolution of singularities of threefolds in positive
              characteristic. {II}},
   JOURNAL = {J. Algebra},
  FJOURNAL = {Journal of Algebra},
    VOLUME = {321},
      YEAR = {2009},
    NUMBER = {7},
     PAGES = {1836--1976},
   }

\bib{CTX13}{article}{
author={Cascini, Paolo},
   author={Tanaka, Hiromu},
   author={Xu, Chenyang},
     title={On base point freeness in positive characteristic},
   JOURNAL = {to appear in Ann. Sci. \'Ecole Norm. Sup..},
   note={arXiv:1305.3502},
   date={2013}
}

\bib{Cutkosky09}{article}{
    AUTHOR = {Cutkosky, Steven Dale},
     TITLE = {Resolution of singularities for 3-folds in positive
              characteristic},
   JOURNAL = {Amer. J. Math.},
  FJOURNAL = {American Journal of Mathematics},
    VOLUME = {131},
      YEAR = {2009},
    NUMBER = {1},
     PAGES = {59--127},
 }





\bib{EGA2}{article}{
    AUTHOR = {Grothendieck, A.},
     TITLE = {\'{E}l\'ements de g\'eom\'etrie alg\'ebrique. {II}. \'{E}tude
              globale \'el\'ementaire de quelques classes de morphismes},
   JOURNAL = {Inst. Hautes \'Etudes Sci. Publ. Math.},
  FJOURNAL = {Institut des Hautes \'Etudes Scientifiques. Publications
              Math\'ematiques},
    NUMBER = {8},
      YEAR = {1961},
     PAGES = {222},
 }
 
\bib{Fujino07}{incollection}{
    AUTHOR = {Fujino, Osamu},
     TITLE = {Special termination and reduction to pl flips},
 BOOKTITLE = {Flips for 3-folds and 4-folds},
    SERIES = {Oxford Lecture Ser. Math. Appl.},
    VOLUME = {35},
     PAGES = {63--75},
 PUBLISHER = {Oxford Univ. Press},
   ADDRESS = {Oxford},
      YEAR = {2007},
}

\bib{Hart}{article} {
    AUTHOR = {Hartshorne, Robin},
     TITLE = {Algebraic geometry},
      NOTE = {Graduate Texts in Mathematics, No. 52},
 PUBLISHER = {Springer-Verlag},
   ADDRESS = {New York},
      YEAR = {1977},
     PAGES = {xvi+496},
   }


\bib{HH90}{article}{
    AUTHOR = {Hochster, Melvin},
    author={ Huneke, Craig},
     TITLE = {Tight closure, invariant theory, and the {B}rian\c con-{S}koda
              theorem},
   JOURNAL = {J. Amer. Math. Soc.},
  FJOURNAL = {Journal of the American Mathematical Society},
    VOLUME = {3},
      YEAR = {1990},
    NUMBER = {1},
     PAGES = {31--116},
 }
 
 \bib{HMX}{article}{
   author={Hacon, Christopher D.},
   author={McKernan, James},
      author={Xu, Chenyang},
   title={ACC for log canonical thresholds},
   journal={To appear in Annals of Math.},
   note={arXiv:1208.4150},
   date={2012},
}

\bib{HX13}{article}{
   author={Hacon, Christopher D.},
   author={Xu, Chenyang},
   title={On the three dimensional minimal model program in positive characteristic},
   note={arXiv:1302.0298},
   date={2013}
}

\bib{Keel99}{article}{
    AUTHOR = {Keel, Se{\'a}n},
     TITLE = {Basepoint freeness for nef and big line bundles in positive
              characteristic},
   JOURNAL = {Ann. of Math. (2)},
  FJOURNAL = {Annals of Mathematics. Second Series},
    VOLUME = {149},
      YEAR = {1999},
    NUMBER = {1},
     PAGES = {253--286},
     }
\bib{Kollar91}{book}{
 author={Koll\'ar , J\'anos  },
author={14 coauthors}
     TITLE = {Flips and abundance for algebraic threefolds},
      NOTE = {Papers from the Second Summer Seminar on Algebraic Geometry
              held at the University of Utah, Salt Lake City, Utah, August
              1991,
              Ast{\'e}risque No. 211 (1992)},
 PUBLISHER = {Soci\'et\'e Math\'ematique de France},
   ADDRESS = {Paris},
      YEAR = {1992},
     PAGES = {1--258},
 }

 \bib{Kol95}{article}{
    AUTHOR = {Koll{\'a}r, J{\'a}nos},
     TITLE = {Quotient spaces modulo algebraic groups},
   JOURNAL = {Ann. of Math. (2)},
  FJOURNAL = {Annals of Mathematics. Second Series},
    VOLUME = {145},
      YEAR = {1997},
    NUMBER = {1},
     PAGES = {33--79},
      ISSN = {0003-486X},
     CODEN = {ANMAAH},
   MRCLASS = {14D25 (14L30)},
  MRNUMBER = {1432036 (97m:14013)},
MRREVIEWER = {Andrzej Bia{\l}ynicki-Birula},
       URL = {http://dx.doi.org/10.2307/2951823},
}
 
 \bib{Kollar13}{book}{
    AUTHOR = {Koll{\'a}r, J{\'a}nos},
     TITLE = {Singularities of the minimal model program},
    series= {Cambridge Tracts in Mathematics},
    VOLUME = {200},
      NOTE = {With a collaboration of S{\'a}ndor Kov{\'a}cs},
 PUBLISHER = {Cambridge University Press},
   ADDRESS = {Cambridge},
      YEAR = {2013},
   }

\bib{KM98}{book}{
   author={Koll{\'a}r, J{\'a}nos},
   author={Mori, Shigefumi},
   title={Birational geometry of algebraic varieties},
   series={Cambridge Tracts in Mathematics},
   volume={134},
   note={With the collaboration of C. H. Clemens and A. Corti;
   Translated from the 1998 Japanese original},
   publisher={Cambridge University Press},
   place={Cambridge},
   date={1998},
   pages={viii+254},

}



\bib{Schwede11}{article}{
author={Schwede, Karl},
title= { A canonical linear system associated to adjoint divisors in characteristic $p>0$},
journal={to appear in J. Reine Angew. Math..},
note={ arXiv:1107.3833v2},
year={2011}
}

\bib{Shokurov92}{article}{
    AUTHOR = {Shokurov, V. V.},
     TITLE = {Three-dimensional log perestroikas},
   JOURNAL = {Izv. Ross. Akad. Nauk Ser. Mat.},
      VOLUME = {56},
      YEAR = {1992},
    NUMBER = {1},
     PAGES = {105--203},
}
		
\bib{Tanaka12}{article}{
    AUTHOR = {Tanaka, H.},
     TITLE = {Minimal models and abundance for positive characteristic log surfaces},
  note={arXiv:1201.5699. To appear in Nagoya Math. Journal..},
   date={2012},
}

\end{biblist}
\end{bibdiv}
\bigskip

\end{document}